\documentclass[12 pt]{article}
\usepackage[english]{babel}
\usepackage[utf8]{inputenc}
\usepackage{amsmath}
\usepackage{color,latexsym,amsthm,amsmath,amssymb,path,enumitem,url,bbm,subcaption,geometry,bbm}
\usepackage{bm}

\usepackage[pdftex]{graphicx}

\usepackage[utf8]{inputenc}
\newtheorem{theorem}{Theorem}[section]

\newtheorem{lemma}[theorem]{Lemma}

\newcommand{\parag}[1]{\vspace{2mm}

\noindent{\bf #1} }

\newcommand{\RR}{\mathbb R}

\newcommand{\pg}{{\mathsf{pg}}}
\newcommand{\gr}{{\mathsf{gr}}}
\newcommand{\pgk}{{\mathsf{pg}^k}}

\newcommand{\pts}{\mathcal{P}}
\def\hv{{\hat{v}}}
\def\hp{{\hat{p}}}

\newcommand{\pot}{{\mathsf{pot}}}

\title{The Minimum Number of Plane Graphs for Sets with Small Hulls}

\author{
Alice Chen\thanks{{\sl ploris085@gmail.com}}
\and
Tara Saini\thanks{{\sl sainitara09@gmail.com}}
\and 
Adam Sheffer\thanks{CUNY: Baruch College, NY, USA. This work is partially supported by by PSC/CUNY award 67511-00 55.
{\sl adamsh@gmail.com}.}
\and
Angela Zhang\thanks{{\sl angelazhang2026@gmail.com}}
}

\date{}

\begin{document}
\maketitle
\begin{abstract} 
Let $\pts$ be a set of $n$ points in $\RR^2$, with a convex hull of size $O(n/\log n)$. 
We prove that $\Omega(12.24^n)$ plane graphs can be drawn on $\pts$, the first non-trivial bound for this problem. 
We also show that a random plane graph, uniformly chosen from the set of all plane graphs of $\pts$, has at most $n/12.24$ isolated vertices.
This improves upon a previous bound of $n/10.18$.

Our analysis is based on studying the expected vertex potentials in a random plane graph.
The potential of a vertex is its degree plus the number of vertices visible from it. 
We show that this quantity can be used to study numbers of plane graphs. 
\end{abstract}

\section{Introduction}

In many real-world applications of planar graphs, the vertices are fixed points that cannot be moved. 
This has led computer scientists to study algorithms for graphs where the vertices are fixed points and the edges are non-crossing line segments.
See Figure \ref{fi:PlaneGraphs}.
We refer to such graphs as \emph{plane graphs}, and they are sometimes also called \emph{plane geometric graphs} and \emph{plane rectilinear graphs}.

\begin{figure}[h]
\centering
\includegraphics[scale=0.15]{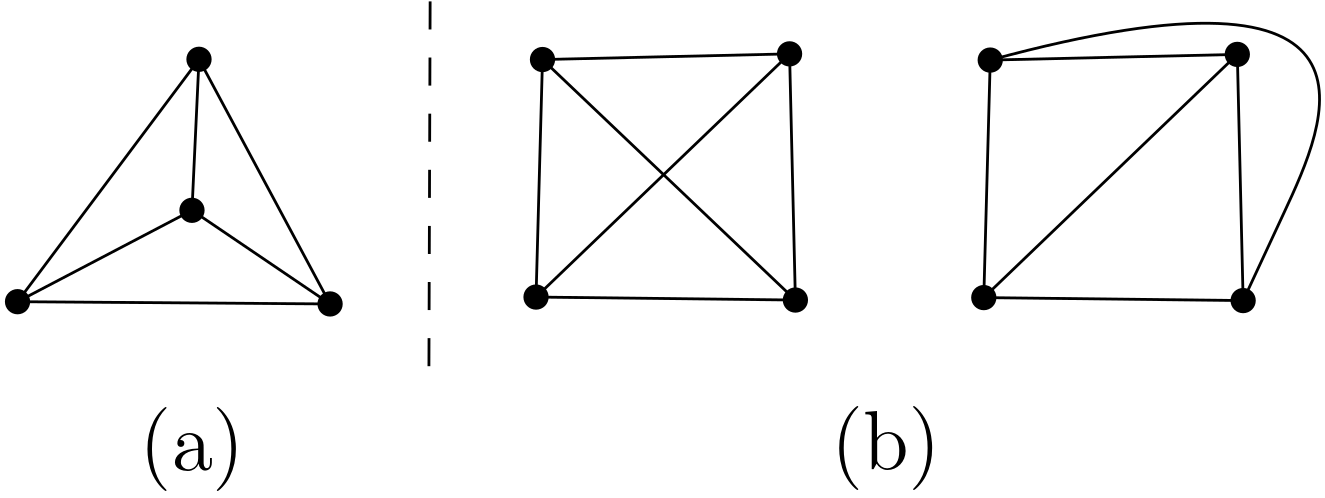}
\caption{ (a) A plane graph. (b) Not plane graphs.}
\label{fi:PlaneGraphs}
\end{figure}

Plane graphs have been significantly studied from an algorithmic perspective. 
For many examples, see \cite{HandbookSU,HandbookTOG}.
However, less is known about the combinatorial aspects of plane graphs.
In particular, they are not as well understood as planar graphs. 

The number of plane graphs that a set of $n$ points may have attracted significant interest. 
After a long sequence of improvements (for example, see \cite{AHHHKV07,ACNS82,GNT00,RSW08}), it is currently known that every set of $n$ points in $\RR^2$ has $O(187.53^n)$ plane graphs \cite{SS13}.
In the other direction, we are only aware of a point set with $\Omega(42.11^n)$ plane graphs \cite{HPS19}, leaving a rather large gap.  

Throughout this work, we always assume that no three points are collinear.
We also assume that the graphs are labeled. 
That is, when considering the graphs that can be drawn over a point set, isomorphic graphs count as distinct.
These are the standard assumptions, made by all works on numbers of plane graphs. 

\begin{figure}[h]
\centering
\includegraphics[scale=0.055]{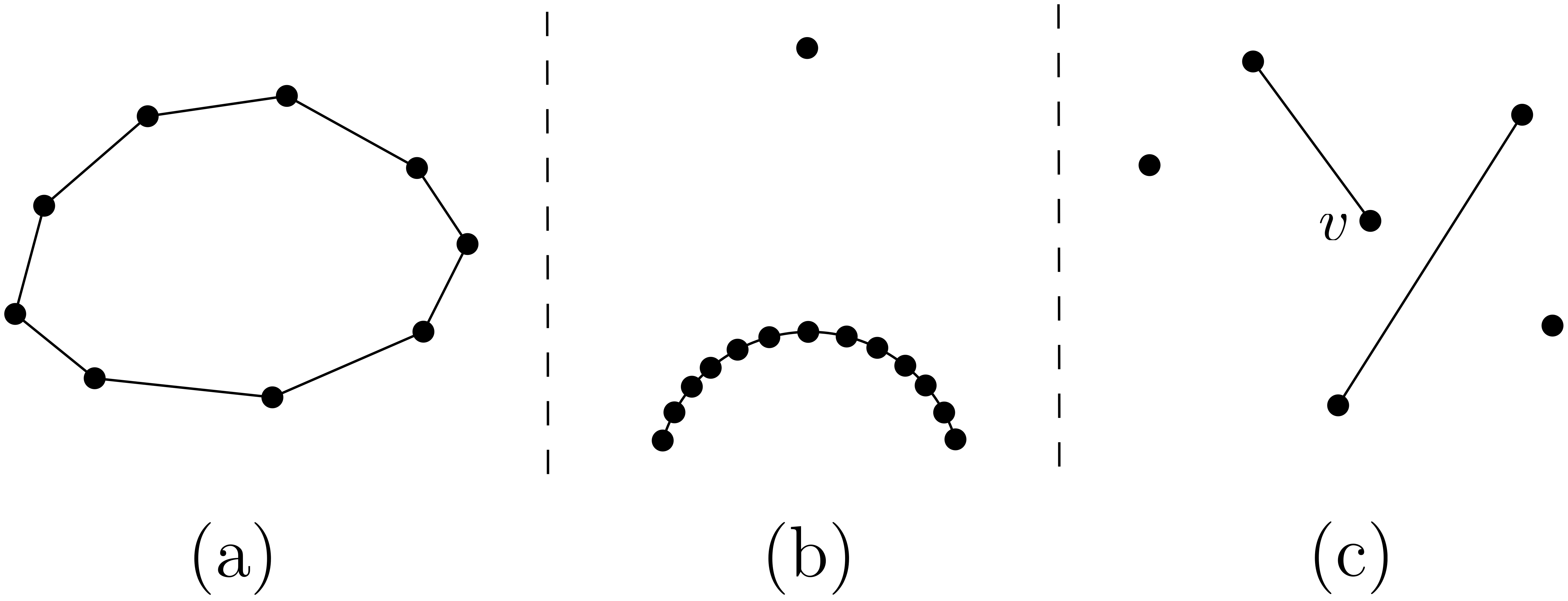}
\caption{(a) Points in convex position. (b) A configuration with a triangular convex hull and approximately $23.31^n$ plane graphs. (c) The ving of $v$ has potential 4.}
\label{fi:Convex}
\end{figure}

The minimum number of plane graphs that can be drawn over a set of $n$ points has also been carefully studied. 
It is known that every set of $n$ points in $\RR^2$ spans $\Omega(11.65^n)$ plane graphs \cite{AHHHKV07}.
This number of plane graphs is achieved by $n$ points in convex position.\footnote{Section \ref{sec:prelim}  provides rigorous definitions for concepts such as convex position and the size of a convex hull.}
See Figure \ref{fi:Convex}(a).
A recent work \cite{LMES24} suggests that point sets 
with a constant-sized convex hull should have significantly more plane graphs. 
Currently, all known configurations with a constant-sized convex hull have $\Omega(23.31^n)$ plane graphs.
This lower bound is obtained by the configuration in Figure \ref{fi:Convex}(b). 
For additional details see \cite{LMES24}.

The first contribution of this work is the first non-trivial bound for the above problem.

\begin{theorem} \label{th:PlaneLower}
Let $\pts$ be a set of $n$ points with a convex hull of size $O(n/\log n)$. 
Then the number of plane graphs that can be drawn on $\pts$ is $\displaystyle \Omega(12.24^n)$.
\end{theorem}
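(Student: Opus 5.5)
We plan to prove Theorem \ref{th:PlaneLower} with a charging argument driven by vertex potentials, as the abstract suggests. Write $\pg(\pts)$ for the number of plane graphs on $\pts$. The basic tool is a double-counting relation between plane graphs on $\pts$ and plane graphs on $\pts\setminus\{v\}$. For a plane graph $G$ on $\pts\setminus\{v\}$, the point $v$ lies in a face of $G$. The number of plane graphs $G'$ on $\pts$ with $G'-v=G$ is $2^{s}$, where $s$ is the number of vertices of $G$ visible from $v$ inside that face. Conversely, deleting $v$ from a plane graph $H$ on $\pts$ gives a graph whose face containing $v$ has at least $\deg_H(v)+\mathrm{vis}_H(v)$ visible vertices, namely the neighbors of $v$ together with the vertices already visible from $v$ in $H$. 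This quantity is exactly the potential $\pot_H(v)$.

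Summing over $v$ and over all plane graphs $H$ on $\pts$, we get a relation of the form
\[ \sum_{v\in\pts}\pg(\pts\setminus\{v\}) \;=\; \sum_{v}\sum_{H} 2^{-\deg_H(v)} \ \ (\text{roughly}), \]
where each $H$ is weighted by $2^{-\deg_H(v)}$ so that the graphs $G'$ obtained by adding edges at $v$ are counted once. This yields $\mathbb{E}_v\, \pg(\pts\setminus\{v\})/\pg(\pts) = \mathbb{E}[2^{-\deg(v)}]$ for a uniformly random plane graph and a uniformly random vertex $v$. So it suffices to show that $\mathbb{E}[2^{-\deg(v)}]\le 1/12.24$ on average over $v$. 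Averaging and induction (removing a suitable vertex each time) then give $\pg(\pts)\ge 12.24\,\pg(\pts\setminus\{v\})$, at least for most removal steps, which telescopes to $\Omega(12.24^n)$.

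To bound $\mathbb{E}[2^{-\deg(v)}]$, we condition on the graph $H-\{\text{edges at }v\}$. Given this graph, the edges from $v$ to its $s$ visible vertices are independent fair coins, so $\deg(v)\sim\mathrm{Bin}(s,1/2)$ and the conditional value is $(3/4)^{s}$, with $s=\pot$. It remains to show that the potential is large on average. In a plane graph, every interior point $v$ lies in a face of the graph with $v$'s edges removed. Because no three points are collinear, that face has a boundary that $v$ sees in several vertices, at least 3 when $v$ is interior, and typically more. The concrete plan is to (i) compute the expected number of visible vertices in the face containing $v$, (ii) use Euler's formula together with the fact that a random plane graph has many edges to relate the average face size to counts of triangulated faces versus larger faces, and (iii) apply convexity (Jensen for $x\mapsto (3/4)^x$ reversed by a finer case split on face sizes). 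This should give $\sum_v \mathbb{E}[(3/4)^{\pot(v)}]\le n/12.24+O(h)$, where $h$ is the hull size.

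The hull vertices are where the argument may fail, since they can see few points. Their contribution is $O(h)$ per step. Over $n$ steps, this loses a factor of at most $C^{h}\cdot\mathrm{poly}$, and with $h=O(n/\log n)$ it is absorbed provided the bound at each step carries a small slack, or provided we first peel the hull and account for it with a $2^{O(h\log n)}$-type term. This is exactly where the hypothesis $O(n/\log n)$ is used.

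The main obstacle is step (iii): proving that the average of $(3/4)^{\pot(v)}$ is at most $1/12.24$. The potential is very small (about 3) only when $v$ sits in a triangular face with all edges absent, and such configurations must be shown to be rare in a uniformly random plane graph. We would try a local exchange argument first. Whenever $v$ has a small potential, adding or removing a nearby edge gives an injection into graphs in which $v$, or a neighbor, has a large potential. This bounds the proportion of low-potential vertices, and a linear program over the distribution of potentials then yields the constant 12.24.
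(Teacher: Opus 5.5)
Your reduction contains an error that invalidates the rest of the plan. You correctly identify the number of vertices that $v$ sees after its edges are deleted from $H$ as $\pot_H(v)$. But then each plane graph on $\pts\setminus\{v\}$ extends to exactly $2^{\pot_H(v)}$ graphs $H$, so the weight that counts each of them once is $2^{-\pot_H(v)}$, not $2^{-\deg_H(v)}$. The correct relation is $\pg(\pts\setminus\{v\})/\pg(\pts)=\Pr[\deg(v)=0]=\mathbb{E}[2^{-\pot(v)}]$. Summed over interior points, this is the paper's identity $\sum_{q\in\pts_I}\pg(\pts\setminus\{q\})=\hv_0(\pts)\,\pg(\pts)$.

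Your quantity $\mathbb{E}[2^{-\deg(v)}]=\mathbb{E}[(3/4)^{\pot(v)}]$ is a different number. For a triangle with one interior point $v$, the ratio is $8/64=1/8$, while $\mathbb{E}[2^{-\deg(v)}]=27/64$. Bounding it is also not a harmless strengthening, because the bound you aim for is false. Every interior point has potential at least $3$, so $(3/4)^{\pot}\ge\frac{27}{8}\,2^{-\pot}$. Your target $\sum_v\mathbb{E}[(3/4)^{\pot(v)}]\le n/12.24+O(h)$ would therefore force $\hv_0(\pts)\le n/41.3+O(h)$. The introduction mentions a set with $\hv_0>n/23.32$, and a triangular-hull set with only about $23.31^n$ plane graphs, whereas your bound would telescope to $41^{\,n-o(n)}$. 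So step (iii) cannot be carried out.

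Even after correcting the target to $\sum_{v\in\pts_I}\mathbb{E}[2^{-\pot(v)}]<n/12.24$, the proposal is missing its main ingredient. Since $x\mapsto 2^{-x}$ is convex and dominated by small potentials, averaged information from Euler's formula and face sizes (your steps (i)--(ii)) cannot give the bound. You need explicit control of how many vertices have potential $3$ or $4$.

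The paper gets this by cross-graph charging. Each potential-3 ving sends one unit per edge of its triangle. The unit goes to the ving obtained by deleting that edge, when this raises the potential, or otherwise to an endpoint around which the edge forms a reflex angle. Every receiver gets at most $2$, which gives $\hp_3\le 2n/5$. A more delicate two-stage scheme gives $\hp_3+\hp_4/2<37n/61$. Substituting into $\hv_0\le\frac18\hp_3+\frac1{16}\hp_4+\frac1{32}(n-\hp_3-\hp_4)$ yields $\hv_0<797n/9760<n/12.24$, and the telescoping lemma finishes.

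Your closing remark about local exchanges and a linear program points in this direction, but without such quantitative bounds nothing in the proposal produces the constant $12.24$. Also, the telescoping should delete only interior points and compare with the minimum over sets with the same hull size, as the paper does. Deleting hull points changes the hull and is unnecessary.
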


The restriction of a convex hull to be of size $O(n/\log n)$ is somewhat arbitrary. 
Our proof works with any constructive upper bound that is asymptotically smaller than $n$. 

As pointed out in \cite{LMES24}, the lower bound on the minimum number of plane graphs is closely related to the behavior of a random plane graph. 
While random graph theory is a large and established field, almost nothing is known for random plane graphs.
The traditional techniques for studying random graphs do not hold when asking for edges not to intersect. 

In random graph theory, one usually takes each potential edge with a fixed probability.
This approach is no longer possible, since it may lead to intersections, which are not allowed in plane graphs. 
Instead, we uniformly choose a graph from the set of all plane graphs drawn over a given point set.
For the rigorous details, see Section \ref{sec:prelim}.

The aforementioned work \cite{LMES24} studies properties of random plane graphs. 
It proves that, for any set of $n$ points in $\RR^2$ with a triangular convex hull, the expected number of isolated vertices in a random plane graph is smaller than $n/10.18$. 
We derive the following improved bound.

\begin{theorem} \label{th:v0}
Let $\pts$ be a set of $n$ points with a convex hull of size\footnote{The $o(X)$ notation means ``asymptotically smaller than $X$.''} $o(n)$.
Then the expected number of isolated vertices in a uniformly chosen plane graph of $\pts$ is less than $\frac{797}{9760}n < n/12.24$.
\end{theorem}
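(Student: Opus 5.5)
The plan is to bound expected isolated vertices via expected vertex potentials, as the abstract suggests. Let $G$ be a uniformly random plane graph of $\pts$. For a vertex $v$ and a plane graph $G$, let $\pot_G(v)$ be the degree of $v$ plus the number of points visible from $v$, i.e.\ points $u$ with $uv\notin G$ such that $G\cup\{uv\}$ is still plane. The first step is a charging identity. Map each graph $G$ with $v$ isolated to the graphs $G\cup\{vu\}$ for $u$ visible from $v$. Conversely, any $G'$ with $\deg_{G'}(v)=1$ has exactly one preimage, obtained by deleting its single edge at $v$. Double counting therefore gives
\[
\sum_{G:\ \deg_G(v)=0} \pot_G(v) \;=\; \#\{G' : \deg_{G'}(v)=1\}.
\]
More generally, I would relate graphs where $v$ is isolated to graphs where $v$ has small degree, weighted by the potential, so that $\Pr[\deg v=0]$ is controlled by $\mathbb{E}$ of something like $\pot(v)$. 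Since $\pot(v)\ge 1$ always, and typically much more, a lower bound on the potential forces $\Pr[v\text{ isolated}]$ to be small.

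The second step is a global lower bound on the average potential. The key structural fact: for any plane graph $G$, adding all visible edges in some order gives a triangulation. So, summed over all $v$, degree plus visibility counts every edge of some triangulation containing $G$ at least twice. A triangulation of a set with hull size $h$ has $3n-3-h$ edges, so $\sum_v \pot_G(v)\ge 2(3n-3-h)=6n-o(n)$. Visible pairs that are not mutually compatible only increase this. Hence the average potential is at least about $6$ for every $G$, and this is where the hypothesis $h=o(n)$ enters.

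The third step combines these. Let $p_k=\mathbb{E}[\#\{v:\deg_G(v)=k\}]/n$. Using the identity from the first step, together with the analogous ones from degree-$1$ vertices (remove or add an edge), I get linear inequalities among $p_0,p_1,p_2,\dots$ and the conditional expected potentials. Feeding in $\sum_v\pot(v)\ge 6n-o(n)$ and $\sum_v\deg(v)=2|E(G)|\le 2(3n)$, and solving the resulting small linear program, should give $p_0\le 797/9760$. I expect the exact constant $797/9760$ to come from optimizing over a finite number of degree/potential classes (degrees $0,1,2$ and "larger") with carefully chosen weights.

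The main obstacle is the last step: a naive identity only gives $p_0\,\mathbb{E}[\pot\mid \text{isolated}]\le p_1$, and the potential of an isolated vertex is not directly bounded below by the global average. The potential at isolated vertices must be shown to be large. I would try to argue locally: an isolated interior vertex $v$ lies in a face of $G$, and it sees at least the three vertices of the triangle containing it in any completion, plus more. The vertices that a degree-$1$ vertex "steals" potential from must then be charged back. Getting this charging tight enough to reach $12.24$ rather than $10.18$ is the real work.
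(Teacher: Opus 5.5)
\textbf{There is a genuine gap, and it is exactly at the step you expect to close the argument.} Your second step, an average potential of about $6$ in every graph, cannot yield the theorem: isolation is controlled by the \emph{lower tail} of the potential, not its mean.

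To see this, sum your layer identities $\sum_{G:\deg_G(p)=0}\binom{\pot_G(p)}{k}=\#\{G':\deg_{G'}(p)=k\}$ over all $k$. Adding or deleting edges at $p$ leaves $\pot(p)$ unchanged, so this gives the exact formula $\Pr[p \text{ isolated}]=\mathbb{E}\bigl[2^{-\pot(p)}\bigr]$. This is the paper's family argument: the unit charge of an isolated ving is split evenly among the $2^{\pot}$ vings obtained by joining $p$ to subsets of its visible vertices.

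Since $x\mapsto 2^{-x}$ is convex, a lower bound on the mean allows almost all interior vertices to have potential $3$ while a few have huge potential. That gives only about $n/8$, which already follows from $\pot\ge 3$ for interior points. This is not merely slack in a linear program. Triangulate $m$ points with a small hull and put one point in each of the roughly $2m$ faces. In the graph formed by the triangulation edges, about two thirds of all vertices have potential exactly $3$, yet the average potential is still about $6$. So no per-graph counting of triangulation edges can produce $797/9760$. Nor can a linear program over degree classes $0,1,2$ using $\sum\deg\le 6n$.

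\textbf{The missing ingredient is a cross-graph bound on how often low potentials occur:} $\hp_3(\pts)\le 2n/5$ and $\hp_3(\pts)+\hp_4(\pts)/2<37n/61$ (Theorem~\ref{th:potential}). These are proved by charging schemes. Each edge of the triangle (or quadrilateral) of a potential-$3$ (or $4$) ving sends its share of charge either
\begin{itemize}
\item to the same point in the graph with that edge deleted, when the deletion raises the potential, or
\item to an endpoint around which the edge forms a reflex angle in $T(G)$.
\end{itemize}
A case analysis then shows that receivers collect little charge. Given these bounds, $\hv_0\le\frac18\hp_3+\frac1{16}\hp_4+\frac1{32}(n-\hp_3-\hp_4)$. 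Maximizing under the two constraints gives $\frac18\cdot\frac25+\frac1{16}\cdot\frac{126}{305}+\frac1{32}\cdot\frac{57}{305}=\frac{797}{9760}$ per point, which is where the constant comes from.

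Your proposal leaves this entire component unsupplied; you yourself call it ``the real work.'' So as written it does not prove the statement.
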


In the other direction, we only know of a set of $n$ points where the expected number of isolated vertices is larger than $n/23.32$ (see \cite{LMES24}).
Thus, this problem is still far from solved.

So far, the most effective method for studying properties of random plane graphs has been \emph{cross-graph charging schemes}.
In this method, certain vertices are given a charge.
Then, the charge is redistributed among the vertices.
By comparing the total charge before and after the redistribution, we obtain information about properties of a random plane graph. 

Charging schemes are not unusual in graph theory proofs. 
However, in this case, the charge is moved between vertices of different graphs. 
For past works which rely on this approach, see for example \cite{SS13,SS11,SSW11}.

The novelty of our proof is in studying the expected value of a new quantity. 
Let $v$ be a vertex in a plane graph $G$.
The \emph{potential} of $v$ is the degree of $v$ plus the number of vertices that are visible from $v$.
For an example, see Figure \ref{fi:Convex}(c).
For a rigorous definition and further discussion, see Section \ref{sec:prelim}.

The above results are obtained by studying the expected number of vertices of potential 3 and 4 in a random plane graph of a point set.

\begin{theorem} \label{th:potential}
Let $\pts$ be a set of $n$ points with a convex hull of size $o(n)$. Then, in a random plane graph of $\pts$, \\[2mm]
(a) The expected number of vertices of potential 3 is at most $2n/5$, \\[2mm]
(b) The expected number of vertices of potential 3 plus half the expected number of vertices of potential 4 is less than $37n/61$. 
\end{theorem}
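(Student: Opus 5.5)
Throughout, I will use the characterization of the potential that comes out of the definitions in Section~\ref{sec:prelim}. For a vertex $v$ of a plane graph $G$, let $H=H(G,v)$ be the graph obtained from $G$ by deleting all edges incident to $v$. Then the potential of $v$ in $G$ should equal the number $k$ of vertices visible from $v$ in $H$. The reason is that an edge $vu$ does not block the view from $v$ to any other vertex, and every neighbor of $v$ is visible from $v$ in $H$. Consequently, the $2^k$ graphs obtained from $H$ by adding any subset of these $k$ segments are exactly the plane graphs $G'$ with $H(G',v)=H$. In all of them $v$ has the same potential $k$.

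The $o(n)$ hull vertices contribute only $o(n)$ to every expectation, so I will restrict attention to interior vertices. An interior vertex always sees at least three vertices, since its visibility region is a star-shaped polygon containing it in its interior. Hence every interior vertex has potential at least $3$.

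The proof is a cross-graph charging scheme between the classes $\{(G,v)\}$ grouped by $H$. Fix an interior vertex $v$ and a graph $H$ in which $v$ sees exactly $k$ vertices. The boundary of the visibility polygon of $v$ in $H$ consists of at most $k$ blocking edges of $H$.
\begin{itemize}
\item Deleting a blocking edge $e$ gives a graph $H_e$ in which $v$ sees at least $k+1$ vertices.
\item Conversely, $H$ is recovered from $H_e$ by adding back $e$. So the number of $H$ mapped to a given $H_e$ is bounded by the number of edges that can be added to $H_e$ so that they become blocking edges of $v$'s visibility polygon.
\end{itemize}

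I would therefore proceed in the following order.
\begin{enumerate}
\item For potential $3$, $v$ sits in a region bounded by three blocking edges, so it has exactly three deletion options. Each option doubles or more the number of completions, from $2^3$ to at least $2^4$.
\item Send the charge of each pair $(G,v)$ with potential $3$ to the pairs $(G',v)$ with $H(G',v)=H_e$.
\item Bound the charge received by a pair with potential $j$ in terms of $j$, using the at most $j$ candidate re-blocking edges.
\end{enumerate}
Summing over all graphs and vertices and dividing by the total number of plane graphs yields a linear inequality of the form
\[
\mathbb{E}[x_3]\le \sum_{j\ge 4}c_j\,\mathbb{E}[x_j],
\]
where $x_j$ is the number of vertices of potential $j$. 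Combined with $\sum_j \mathbb{E}[x_j]=n-o(n)$, and with $c_j$ small enough, this gives the bound $2n/5$ of part~(a).

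For part~(b) I would run the same scheme but also let potential-$4$ vertices send half a unit of charge, deleting one of their at most four blocking edges. The coefficients are then tuned so that the combined inequality gives $\mathbb{E}[x_3]+\frac12\mathbb{E}[x_4]<37n/61$.

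I expect the main obstacle to be the multiplicity bound in step~3, namely controlling how many $H$ map to the same $H_e$. An added edge need not simply restore a single blocked region. It may block several of the visible vertices at once, or even reduce the potential by more than one. So I would first try to charge only through an edge $e$ for which the count of visible vertices increases by exactly one, and which is determined locally by the newly visible vertex and its two angular neighbors among the vertices visible from $v$. I would then verify by a case analysis on the local configuration that each receiving pair is charged at most a bounded number of times, with a bound of the order of $j$ for potential $j$. The specific constants $2/5$ and $37/61$ should then come out of the resulting linear program.
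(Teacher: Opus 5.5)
Your gap is not the multiplicity count you flag; it is whether a target exists at all. Two of your claims are false: that deleting a blocking edge $e$ always gives an $H_e$ in which $v$ sees at least $k+1$ vertices, and that a potential-3 vertex has exactly three such deletion options.

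Suppose $v$ sees exactly $a,b,c$ and the segment $(a,b)$ is in $H$. Suppose $H$ also contains an edge $(a,x)$ that passes just behind $b$, crossing the ray from $v$ through $b$ beyond $b$, with no point of $\pts$ between the two segments. Then deleting $(a,b)$ reveals nothing. If instead $(a,b)\notin H$, the blocking edge is $(a,x)$, and deleting it reveals nothing when a further edge lies behind it. If the part of the wedge beyond $ab$ contains no points (e.g.\ $ab$ is a hull edge), there may be no blocking edge at all.

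These configurations are not boundary effects and can occur at linearly many vertices simultaneously. For such a vertex your scheme has no higher-visibility family of the same vertex to absorb that share of the charge. So the inequality $\mathbb{E}[x_3]\le\sum_{j\ge4}c_j\mathbb{E}[x_j]$, with coefficients small enough to give $2/5$, is never established. (If one only deletes sides of the triangle, the multiplicity count is the easy part; compare Lemma~\ref{le:removal3}.)

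The missing idea is the paper's second kind of transfer, which moves charge to a \emph{different vertex of the same graph}. Fix a triangulation $T(G)\supseteq G$. Lemma~\ref{le:NoRemReflex} shows that if a side $(a,b)$ of the triangle of $(p,G)$ gives no removal charge, then $(a,b)\in T(G)$ and it forms a reflex angle around $a$ or $b$. The unit is then sent to $(a,G)$ or $(b,G)$, which by Lemma~\ref{le:Affected3} is a hing or has potential at least 4. Lemma~\ref{le:ReflexGen} allows at most two reflex edges per vertex, and Lemmas~\ref{le:removal3} and~\ref{le:reflex3} show that two charges of one kind exclude the other kind. Together these cap every receiver, hull vertices included, at 2 units. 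This gives $3\hp_3\le 2(n-\hp_3)$ and hence the constant $2/5$; note that hull vertices must be allowed to receive charge here, not discarded.

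Part (b) inherits the same gap and adds more. Potential-4 vertices already hold up to 2 extra units after the first stage. For them a non-removal side need not even yield a usable reflex target: $(p,a)$ may be missing from $T(G)$, the reflex angle may be split by several edges, or the target may itself have potential 3 or 4. The paper handles these cases with self charges of types (i) and (ii). Its case analysis caps potential-$\ge 5$ vings at $37/8$, hings at $2$, and the leftover on potential-4 vings at $7/4$. This yields $3(\hp_3+\hp_4/2)\le\frac{37}{8}(n-\hp_3-\hp_4)+\frac74\hp_4$ and hence $37/61$. Tuning coefficients in an unspecified linear program does not replace this analysis.
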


Part (a) of Theorem \ref{th:potential} is obtained by a relatively simple and elegant cross-graph charging scheme. 
The proof of part (b) requires a more involved technical analysis. 
The bounds of Theorem \ref{th:potential} are the main new ingredients in proving Theorem \ref{th:PlaneLower} and Theorem \ref{th:v0}.

We believe that studying the expected potentials in random plane graphs may be useful in additional ways. 
In particular, such a study may also help understanding the maximum number of plane graphs that a point set may have. 
We intend to continue exploring this property.

In Section \ref{sec:prelim}, we study the basics of plane graphs more carefully, including random plane graphs and the concept of potential.
In Section \ref{sec:mainTh}, we prove Theorem \ref{th:PlaneLower} and Theorem \ref{th:v0} by relying on Theorem \ref{th:potential}.
In Section \ref{sec:v3}, we prove part (a) of Theorem \ref{th:potential}.
Finally, in Section \ref{sec:v34}, we prove part (b) of Theorem \ref{th:potential}.

\section{Plane graph preliminaries} \label{sec:prelim}

We now study basics of planar point sets and plane graphs.
The reader may wish to first skim this section, and return to it later as needed.
Throughout this section, $\pts$ is a set of $n$ points in $\RR^2$.

\begin{figure}[h]
\centering
\includegraphics[scale=0.25]{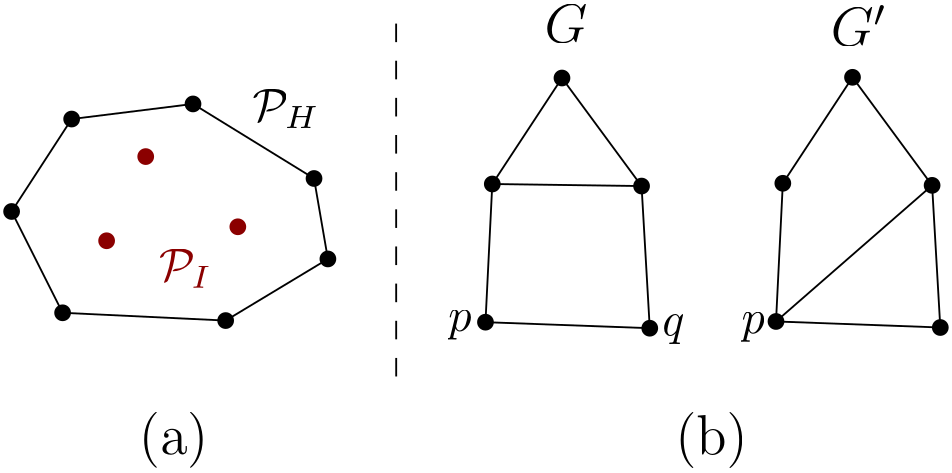}
\caption{(a) Hull points and interior points. (b) Each of $(p,G)$, $(p,G')$, and $(q,G)$ is a distinct ving.}
\label{fi:GraphPrelim}
\end{figure}

The \emph{convex hull} of a point set $\pts$ is the smallest convex polygon that contains $\pts$.
Every convex shape that contains $\pts$ also contains its convex hull. 
See Figure \ref{fi:GraphPrelim}(a).
The vertices of this polygon are points of $\pts$.
We refer to these as the \emph{hull points} of $\pts$ and let $\pts_H$ be 
the set of hull points of $\pts$.
We also set $\pts_I = \pts \setminus \pts_H$ and refer to the elements of $\pts_I$ as the \emph{interior points} of $\pts$. 
See Figure \ref{fi:GraphPrelim}(a) again.

In a \emph{plane graph} of $\pts$, there is a vertex for each point of $\pts$.
The edges are non-crossing line segments.
See Figure \ref{fi:PlaneGraphs}.
Let $\gr(\pts)$ be the set of plane graphs that can be drawn on $\pts$. 
A \emph{ving} (short for \emph{Vertex IN Graph}) is a pair $(p,G) \in \pts_I \times \gr(\pts)$. 
For an example, see Figure \ref{fi:GraphPrelim}(b).
One may think of a ving as an instance of a vertex in one specific graph, as opposed to a vertex that appears in all graphs of $\gr(\pts)$.
We note that vings are defined only with interior points of $\pts$. 
The \emph{degree} of a ving $(p,G)$ is the degree of the vertex $p$ in the graph $G$. 

Let $\pg(\pts)$ be the number of plane graphs that can be drawn over $\pts$.
In other words, $\pg(\pts) = |\gr(\pts)|$.
We also define $\pgk(n) = \max_{\pts}\pg(n)$, where the maximum is over all sets $\pts$ of $n$ points and a convex hull of size $k$. 
In other words, $\pgk(n)$ is the maximum number of plane graphs that a set of $n$ points with a convex hull of size $k$ may have. 
For example, it is not difficult to check that $\pg^3(3)=2^3=8$, that $\pg^3(4)=2^6=64$, and that $\pg^4(4)=2^4\cdot 3=48$.

\parag{Potential.} 
Consider a plane graph $G$ of $\pts$ and points $p,q\in \pts$.
We say that $q$ \emph{is visible from} $p$ in $G$ if $(p,q)$ is not an edge of $G$ and does not intersect any edge of $G$. 
In Figure \ref{fi:GraphPrelim}(b), the top vertex is visible from $p$ in $G'$ but not in $G$. 
The visibility of the ving $(p,G)$ is the number of vertices visible from $p$ in $G$.
In Figure \ref{fi:GraphPrelim}(b), both $(p,G)$ and $(p,G')$ have visibility 1. 

The \emph{potential} of the ving $v=(p,G)$ is the degree of $v$ plus the visibility of $v$.
In other words, the potential of $v$ is the number of vertices to which it is connected or can be connected.
In Figure \ref{fi:GraphPrelim}(b), the ving $(p,G)$ has potential 3, while $(p,G')$ has potential 4.
We denote the potential of $v$ as $\pot(v)$.
We also say that the vertices of $G$ that contribute to $\pot (v)$ \emph{affect} $v$.
That is, the vertices that affect $v$ are the ones that are connected to $v$ in $G$ or visible from $v$.

\begin{figure}[h]
\centering
\includegraphics[scale=0.06]{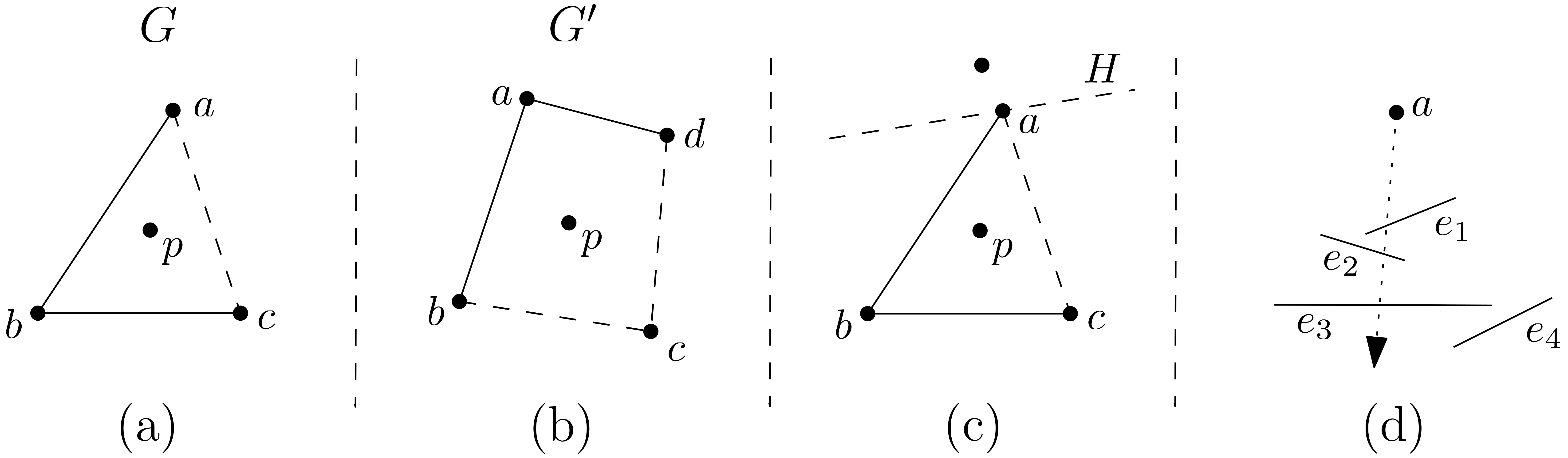}
\caption{ (a) The dashed edge is not in $G$ but is part of the triangle of $(p,G)$. (b) The quadrilateral of $(p,G')$. (c) The closed upper half-plane $H$ intersects $\triangle abc$ in $a$. (d) The ray from $a$ implies that $e_1<e_2<e_3$. }
\label{fi:polygon}
\end{figure}

Consider a potential 3 ving $v=(p, G)$.
Let $a,b,c\in\pts$ be the three vertices that affect $v$. 
We say that $\triangle abc$ is the \emph{triangle of} $v$. 
By definition, this triangle does not contain additional points of $\pts$ and is not intersected by edges that are not sides of the triangle. 
See Figure \ref{fi:polygon}(a).
Similarly, the \emph{quadrilateral} of a potential 4 ving $u$ is the quadrilateral whose four vertices are the ones who affect $u$. 
We imagine shooting an infinite ray from $p$ in an arbitrary direction and then rotating this ray clockwise.
The edges of the quadrilateral of $v$ correspond to pairs of vertices that were visited by the ray consecutively.
We note that this quadrilateral may be either convex or concave.
It does not contain other vertices of $\pts$ and does not intersect other edges.
See Figure \ref{fi:polygon}(b).

\begin{lemma} \label{le:Affected3} 
A potential 3 ving cannot be affected by another potential 3 ving.
\end{lemma}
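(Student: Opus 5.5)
Suppose for contradiction that $v=(p,G)$ and $u=(q,G)$ are two potential 3 vings in the same graph $G$, with $q$ affecting $p$. The plan is to locate $q$ inside the triangle of $v$, extract its three affecting vertices, and show one of them must sit strictly inside that triangle.

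First, set up the geometry of $v$. Let $a,b,c$ be the three vertices affecting $v$, so $q\in\{a,b,c\}$; say $q=a$. I will use the following characterization, which follows by rotating a ray around $p$ as in the definition of the quadrilateral.
\begin{itemize}
\item Every ray from $p$ first hits either a vertex or an edge of $G$.
\item Since only $a,b,c$ are reachable, $p$ lies in the interior of $\triangle abc$; otherwise some direction would escape, or would reach a fourth vertex.
\item Each side of $\triangle abc$ is either an edge of $G$, or at least is not crossed by any edge.
\item The open triangle contains no points of $\pts$ other than $p$, and no edges other than some of the segments $pa,pb,pc$.
\end{itemize}

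Next, look at $u=(a,G)$. Since $a$ is interior (vings only use interior points) and has potential 3, the same reasoning shows $a$ lies in the interior of the triangle $\triangle xyz$ of $u$, spanned by the vertices affecting $a$. Because $a$ affects $p$, symmetrically $p$ affects $a$, as both adjacency and visibility are symmetric. Hence $p\in\{x,y,z\}$; write $p=x$.

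Now consider the rays from $a$. The region near $a$ inside $\triangle abc$ is the wedge at $a$ bounded by the directions $ab$ and $ac$, and this wedge has angle less than $\pi$. Since $a$ lies strictly inside $\triangle pyz$, the three directions from $a$ toward $p,y,z$ are not contained in any closed half-plane through $a$. So at least one of $y,z$, say $y$, has its direction from $a$ pointing into the open wedge $\angle bac$ (which contains the direction $ap$), or outside the wedge. I will handle both possibilities.
\begin{itemize}
\item \textbf{Direction inside the wedge.} Since $y$ is visible from or adjacent to $a$, the segment $ay$ crosses no edge. It starts into the interior of $\triangle abc$, so it must either end at a point inside the triangle, which must be $p$ (a contradiction, since $y\ne p$), or exit through a side. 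Exiting through $bc$ is only possible if $bc$ is not an edge; but then $y$ lies beyond $bc$ while $b$ and $c$ also affect... This is the delicate point.
\item \textbf{Direction outside the wedge.} This uses points outside the triangle and needs the same care.
\end{itemize}

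The cleaner route I would take is a counting argument on the rays from $a$. The rays from $a$ into the wedge $\angle bac$ sweep across $\triangle abc$ and hit, in order, $b$, then either $p$ or the segments $pb,pc$, then $c$. In particular, the rays between $ab$ and $ap$ hit either $b$ or the edge $pb$, and those between $ap$ and $ac$ hit either $c$ or $pc$. I then split on whether $pb$ is an edge.
\begin{itemize}
\item If $pb\notin G$, then the rays from $a$ near $ab$ reach $b$, so $b$ is visible from $a$ or is a neighbor of $a$.
\item If $pb\in G$, then $b$ is still affected: $ab$ is a side of $\triangle abc$, which crosses no edge, so $b$ is visible from or adjacent to $a$.
\end{itemize}
The same holds for $c$. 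So $b$, $c$ and $p$ all affect $a$, giving $\{x,y,z\}=\{p,b,c\}$.

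But $a$ must lie in the interior of $\triangle pbc$, while $p$ lies in the interior of $\triangle abc$. These cannot both hold: if $p\in\operatorname{int}\triangle abc$, then $\triangle pbc\subset\triangle abc$, and $a$ is a vertex of $\triangle abc$ and so is not in $\operatorname{int}\triangle pbc$. This contradiction proves the lemma.

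The main obstacle is rigorously justifying that a potential 3 interior ving lies strictly inside its triangle, and that sides like $ab$ cross no edge. I would prove both from the rotating-ray picture: each consecutive pair of affected vertices bounds an empty, edge-free sector.
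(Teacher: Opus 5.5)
Your final route is correct and is essentially the paper's argument. Like the paper, you show that $p,b,c$ all affect $a$; the paper treats this as immediate from the triangle of $v$ not being crossed by edges. You then use that the affecting vertices of an interior point must surround it. The paper phrases this as the closed half-plane through $a$ meeting $\triangle abc$ only at $a$ having to contain a vertex affecting $a$; you phrase it as the impossibility of $a\in\operatorname{int}\triangle pbc\subseteq\triangle abc$.

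One caveat for your deferred justification: your first bullet is false as stated, since a ray from $p$ can leave through a hull side that is not an edge of $G$ and hit nothing. So the surrounding property has to come from $p$ (respectively $a$) not being a hull vertex, exactly as in the paper's half-plane argument. The abandoned wedge analysis and the split on whether $pb\in G$ are unnecessary.
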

\begin{proof}
Consider a potential 3 ving $v=(p, G)\in \pts_I \times \gr(\pts)$ that is affected by $(a,G),(b,G)$, and $(c,G)$. 
That is, $(a,G),(b,G),(c,G)$ are the vertices of the triangle of $v$.
To prove the lemma, we show that $u=(a,G)$ cannot be a potential 3 ving. 
First, if $a$ is a hull point of $\pts$, then it does not correspond to a ving. 
We may thus assume that $a$ is not a hull point. 
By definition, each of $(p,G),(b,G),(c,G)$ affects $u$.

We note that there exists a closed half-plane $H$ that intersects $\triangle abc$ only at $a$.
See Figure \ref{fi:polygon}(c). 
We claim that, since $a$ is not a hull vertex, the half-space $H$ contains at least one vertex that affects $a$.
This implies that $\pot(u)\ge 4$ and completes the proof. 

It remains to prove that $H$ contains at least one vertex that affects $a$.
Let $E$ be the set of edges of $G$ that have a non-empty intersection with $H$. 
We define a partial order on $E$, as follows. 
An edge $e_1$ is smaller than an edge $e_2$ if we can shoot a straight ray from $a$ that first hits $e_1$ and then hits $e_2$.
See Figure \ref{fi:polygon}(d).
This partial order is well-defined.

Let $e$ be an edge of $E$ such that no other edge of $E$ is smaller than $e$.
At least one vertex of $e$ is contained in $H$ and this vertex is visible from $a$, as required.
(If $E=\emptyset$, then all vertices of $\pts\cap H$ are visible from $a$. Since $a$ is not a hull vertex, this intersection is non-empty.) 
\end{proof}

Let \emph{hings} be hull points in graphs. 
That is, a hing is a pair $(p,G) \in \pts_H \times \gr(\pts)$. 
Our analysis revolves around vings, but hings are also mentioned a few times.

\begin{figure}[h]
\centering
\includegraphics[scale=0.24]{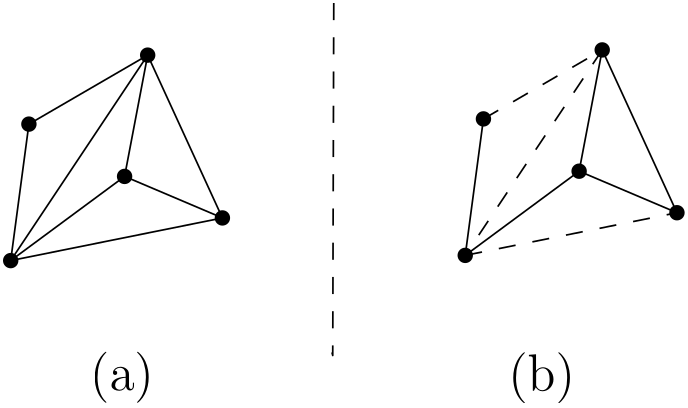}
\caption{ (a) A triangulation of five points. (b) To obtain a triangulation that contains the solid edges, we repeatedly add edges until this is no longer possible. }
\label{fi:triangulation}
\end{figure}

\parag{Triangulations and reflex angles.}
A \emph{triangulation} of $\pts$ is a plane graph of $\pts$ that contains all the edges of the convex hull and where all bounded faces are triangles. 
See Figure \ref{fi:triangulation}(a).
Equivalently, a triangulation of $\pts$ is a plane graph of $\pts$ that is maximal, in the sense that no additional edges can be added.

For each graph $G\in \gr(\pts)$, there exists a triangulation of $\pts$ that contains $G$. 
Indeed, such a triangulation is obtained by repeatedly adding edges to $G$ until this is no longer possible.
See Figure \ref{fi:triangulation}(b).
For a graph $G$, we arbitrarily fix a triangulation $T$ that contains $G$ and denote it as \emph{the triangulation associated with $G$}. 
We also write $T=T(G)$.

\begin{figure}[h]
\centering
\includegraphics[scale=0.34]{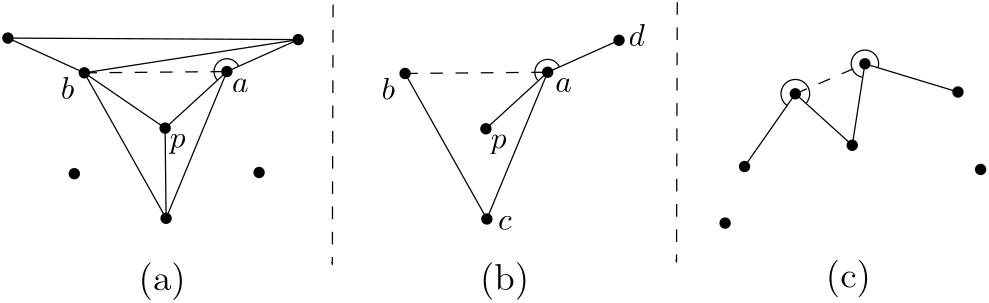}
\caption{ (a) The edge $(a,b)$ forms a reflex angle around $a$. (b) The edges $(a,p)$ and $(a,d)$ form the boundaries of the reflex angle around $(a,b)$. (c) An edge of the convex hull of $\pts$ forms a reflex angle around both of its endpoints. }
\label{fi:reflex}
\end{figure}

Consider a ving or hing $v=(a,G) \in \pts\times \gr(\pts)$.
We say that the edge $(a,b)$ of $T(G)$ \emph{forms a reflex angle around} $v$ if, when removing $(a,b)$ from $T(G)$, the angle that forms around it is larger than $\pi$. 
See Figure \ref{fi:reflex}(a).
Note that $(a,b)$ is not required to exist in $G$. 

\begin{lemma} \label{le:ReflexGen}
Consider a ving or hing $v=(a,G)$.
At most two edges form a reflex angle around $v$.
If two edges form reflex angles, then these are consecutive edges in the order around $a$ in $T(G)$.
\end{lemma}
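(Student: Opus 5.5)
Order the edges of $T(G)$ incident to $a$ cyclically by angle, say $e_1,\dots,e_d$ (indices mod $d$). For each $i$, write $\theta_i$ for the angle from $e_{i-1}$ to $e_i$, measured clockwise through the region that contains no other edge at $a$. When $e_i$ is removed from $T(G)$, the angle that forms around it is bounded by $e_{i-1}$ and $e_{i+1}$ and equals $\theta_i+\theta_{i+1}$. So $e_i$ forms a reflex angle around $v$ exactly when $\theta_i+\theta_{i+1}>\pi$. The plan is to translate the statement into a counting statement about the cyclic sequence $\theta_1,\dots,\theta_d$.

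First we bound the gaps. If $a$ is an interior point, every face of $T(G)$ around $a$ is a triangle, so $\theta_i<\pi$ for all $i$ and $\sum_i\theta_i=2\pi$. If $a$ is a hull point, the angle between its two hull edges lies outside $\pts$; there is exactly one such exterior gap, of size greater than $\pi$. All other gaps are triangle angles, each less than $\pi$, and they sum to the interior angle of the hull at $a$, which is less than $\pi$. We also handle degenerate degrees. For an interior point, $d\ge 3$, since a triangulation has all bounded faces triangles and the gaps must each be less than $\pi$. For a hull point with $d=2$, both edges form reflex angles and they are consecutive, so the claim holds trivially.

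The core step is that two non-adjacent pairs $(\theta_i,\theta_{i+1})$ and $(\theta_j,\theta_{j+1})$, i.e.\ pairs sharing no index, cannot both sum to more than $\pi$. In the interior case this is immediate: their sum would exceed $2\pi=\sum\theta$, a contradiction. In the hull case, at most one of the two pairs contains the exterior gap. The other pair consists of two triangle angles whose sum is at most the interior hull angle, which is less than $\pi$. So that pair is not reflex.

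It remains to count how many pairwise "adjacent" pairs there can be. Reflex edges $e_i$ and $e_j$ correspond to the pairs $\{i,i+1\}$ and $\{j,j+1\}$. These are disjoint unless $j=i\pm1$, with the exception that when $d=3$ any two pairs intersect. This exception is the main obstacle, and in the interior case with $d=3$ I would argue directly. Two reflex edges there share one gap, say $\theta_2$, with $\theta_1+\theta_2>\pi$ and $\theta_2+\theta_3>\pi$. Three reflex edges would require $2(\theta_1+\theta_2+\theta_3)>3\pi$, but $2\cdot 2\pi=4\pi>3\pi$, so this is not a contradiction and a different argument is needed. Geometrically, $a$ has degree 3 and lies inside the triangle spanned by its three neighbors $b,c,d$, with the three faces $abc$, $acd$, $adb$. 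Removing $(a,b)$ merges two of these into the quadrilateral $abcd$, which is reflex at $a$ precisely when $a$ lies on the far side of the line $cd$ from $b$. Since $a$ is inside $\triangle bcd$, it lies on the same side as $b$. Hence no edge is reflex at all when $d=3$ in the interior case. In the hull case with $d=3$ and only two triangle gaps, the interior-angle bound shows the middle edge's pair is not reflex, leaving at most the two hull edges, which are consecutive. In all remaining cases, any two reflex edges are consecutive, so three reflex edges would contain two non-consecutive ones when $d\ge4$. This gives at most two reflex edges, and when there are two they are consecutive.
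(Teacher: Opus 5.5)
Your argument fails in the interior case of degree $3$. That case cannot be repaired, because the statement is false there.

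Your own bookkeeping already shows this. With $\theta_1+\theta_2+\theta_3=2\pi$ and every $\theta_i<\pi$, each pair satisfies $\theta_i+\theta_{i+1}=2\pi-\theta_{i+2}>\pi$, so all three edges form reflex angles. Geometrically, removing $(a,b)$ merges the faces $abc$ and $adb$. The angle created at $a$ is $\angle cab+\angle bad=2\pi-\angle cad>\pi$, since $\angle cad$ is an angle of the triangle $acd$.

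Your side-of-line criterion is reversed. Because $c$ and $d$ lie on opposite sides of the line $ab$, if $a$ were on the far side of the line $cd$ from $b$, the segments $ab$ and $cd$ would cross and the quadrilateral $acbd$ would be convex. It is exactly because $a$ lies inside $\triangle bcd$ that the angle at $a$ is reflex.

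A concrete counterexample to the lemma is a single point $a$ inside a triangle ($n=4$). In the unique triangulation, all three edges at $a$ form reflex angles around $(a,G)$, for every $G$.

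Outside this case your proof is correct. It is essentially the paper's argument rewritten in terms of the gaps $\theta_i$. The paper observes that if $(a,b)$ is reflex, the wedge between its two neighbouring edges that avoids $(a,b)$ is smaller than $\pi$. Hence only those two neighbours can also be reflex, and the paper then claims that their merged angles are disjoint. This is your observation that two index-disjoint pairs cannot both exceed $\pi$.

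The paper's proof has the same blind spot as yours. When $a$ is an interior point of degree $3$ in $T(G)$, the angles obtained by removing the two neighbours of $(a,b)$ both contain that small wedge, so they are not disjoint.

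The correct statement needs the extra hypothesis that $a$ is a hull point or has degree at least $4$ in $T(G)$. Under that hypothesis your proof goes through verbatim. Later applications of the lemma to vings must then treat interior vertices of degree $3$ in $T(G)$ separately.
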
  
\begin{proof}
Assume that the edge $(a,b)$ forms a reflex angle around $v$. 
Let $(a,p)$ and $(a,d)$ be the edges that form the boundaries of the angle around $(a,b)$.
For example, see Figure \ref{fi:reflex}(b).
By definition, the angle between $(a,p)$ and $(a,d)$ in the side that does not contain $(a,b)$ is smaller than $\pi$.
This implies that the only other edges whose removal might lead to an angle larger than $\pi$ are $(a,p)$ and $(a,d)$. 
Since these two angles are disjoint around $a$, at most one of these is larger than $\pi$.
\end{proof}

We note that every edge of the convex hull of $\pts$ forms a reflex angle around both of its endpoints.
For example, see Figure \ref{fi:reflex}(c).

\parag{Random plane graphs.}
We define a \emph{random plane graph} of $\pts$ to be a graph uniformly chosen from $\gr(\pts)$.
That is, each graph of $\gr(\pts)$ is chosen with a probability of $1/\pg(\pts)$.

Let $\hv_0$ be the expected number of vings of degree 0 in a random plane graph of $\pts$.
Let $v_0(G)$ be the number of internal vertices of degree 0 in $G$. 
Then we have that
\[ \hv_0(\pts) = \frac{\sum_{G\in \gr(\pts)}v_0(G)}{\pg(\pts)}.\]
We recall that vings correspond only to internal points of $\pts$.
For example, if $\pts$ is a set of points in convex position, then $\hv_0(\pts) =0$.

For a graph $G\in \gr(\pts)$, let $p_i(G)$ be the number of potential $i$ vings in $G$. 
Let $\hp_i(\pts)$ denote the expected size of $p_i(G)$.
In other words, 
\[ \hp_i(\pts) = \frac{\sum_{G\in \gr(\pts)}p_i(G)}{\pg(\pts)}.\]

\section{Proofs of Theorems \ref{th:PlaneLower} and \ref{th:v0}} \label{sec:mainTh}

In this section, we prove Theorems \ref{th:PlaneLower} and \ref{th:v0} by relying on Theorem \ref{th:potential}.
The proof of this latter result appears in Sections \ref{sec:v3} and \ref{sec:v34}.

As a first step, we derive a variant of a standard result. 
For example, see \cite[Lemma 1.4]{LMES24}.

\begin{lemma} \label{le:nVSn-1}
Consider fixed $3\le k < n$. 
If $\delta > 0$ satisfies $\hv_0 (\pts) \le \delta n$ for every set $\pts$ of $n$ points with a convex hull of size $k$, 
then $\displaystyle \pgk(n) \ge \frac{n-k} {\delta n} \cdot  \pgk(n-1)$.
\end{lemma}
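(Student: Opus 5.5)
The plan is a double counting of pairs $(p,G)$ where $p$ is an interior point and $G$ is a plane graph in which $p$ is isolated. Take a set $\pts$ of $n$ points with a convex hull of size $k$ that attains $\pgk(n)$. Count
\[ S=\sum_{G\in\gr(\pts)} v_0(G)=\hv_0(\pts)\,\pg(\pts). \]
The hypothesis gives $S\le \delta n\cdot \pg(\pts)=\delta n\cdot \pgk(n)$.

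Next, regroup $S$ by the interior point. For a fixed $p\in\pts_I$, the graphs of $\gr(\pts)$ in which $p$ has degree $0$ are in bijection with $\gr(\pts\setminus\{p\})$. The bijection deletes the isolated vertex $p$. Its inverse adds $p$ back as an isolated vertex; no edge passes through $p$, because no three points are collinear. Therefore
\[ S=\sum_{p\in\pts_I}\pg(\pts\setminus\{p\}). \]

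The main point to check is the lower bound on each summand. Removing an interior point $p$ does not change the convex hull, since the hull is determined by the hull points, which remain. So $\pts\setminus\{p\}$ is a set of $n-1$ points with a convex hull of size $k$. However, $\pgk(n-1)$ is defined as a maximum, so the bound $\pg(\pts\setminus\{p\})\ge \pgk(n-1)$ does not follow directly.

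I would fix this by choosing the configuration the other way round. Let $\mathcal{Q}$ be a set of $n-1$ points with a convex hull of size $k$ attaining $\pgk(n-1)$. Since $k<n$, we have $n-1\ge k$. Add a new interior point $q$ in general position to obtain $\pts=\mathcal{Q}\cup\{q\}$, which has $n$ points and a convex hull of size $k$. Then:
\begin{itemize}
\item the bijection above gives $\pg(\mathcal{Q})$ graphs of $\pts$ in which $q$ is isolated;
\item the other $n-k-1$ interior points each contribute a nonnegative term.
\end{itemize}
This route only bounds a single summand, though, and does not give the factor $n-k$.

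For the intended inequality, I therefore read $\pgk$ in the averaged sense supplied by the regrouping. Average over the $n-k$ interior points:
\[ \pgk(n)\ \ge\ \pg(\pts)\ \ge\ \frac{S}{\delta n}=\frac{1}{\delta n}\sum_{p\in\pts_I}\pg(\pts\setminus\{p\}). \]
Then apply the extremal bound $\pg(\pts\setminus\{p\})\ge \pgk(n-1)$ to each of the $n-k$ summands, where $\pgk$ must be understood as the minimum over configurations. This is how it is used in Theorem \ref{th:PlaneLower}, where lower bounds are sought, and the ``max'' in the definition appears to be a typo for ``min.''

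With $\pgk$ interpreted as a minimum, the proof is clean. Choose $\pts$ attaining $\pgk(n)$. Each $\pts\setminus\{p\}$ with $p\in\pts_I$ has $n-1$ points and a convex hull of size $k$, so $\pg(\pts\setminus\{p\})\ge\pgk(n-1)$. This gives
\[ \delta n\,\pgk(n)\ \ge\ S\ \ge\ (n-k)\,\pgk(n-1), \]
which is the claim.

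The only delicate step is confirming that deleting an interior point keeps the hull size at exactly $k$. This holds because the hull vertices are untouched and no point is collinear with two others.
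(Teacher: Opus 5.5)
Your final argument is correct and is essentially the paper's proof. You take a minimizer $\pts$, double count isolated vings as $\hv_0(\pts)\,\pg(\pts)=\sum_{q\in\pts_I}\pg(\pts\setminus\{q\})$, and bound each of the $n-k$ summands below by $\pgk(n-1)$. The paper's proof also uses $\pgk$ as a minimum (it explicitly picks a set minimizing $\pg$), so your diagnosis that ``maximum'' is a typo is right. The detours through a maximizer and the inequality $\pgk(n)\ge\pg(\pts)$ are unnecessary and should be dropped from a write-up.
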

\begin{proof}
Let $\pts$ be a set that minimizes $\pg(\pts)$
among all sets of $n$ points and a convex hull of size $k$.
We can get some plane graphs of $\pts$ by
choosing an internal point $q \in \pts_I$ and
a plane graph $H$ of $\pts \setminus \{q\}$, and then inserting $q$ to $H$ with no edges.
A plane graph $G$ of $\pts$ can be obtained
in exactly $v_0(G)$ ways in this manner.
This implies that
\[
\hv_0(\pts)\cdot \pg(\pts) = \sum_{G\in \gr(\pts) } v_0(G) = \sum_{q \in \pts_I} \pg(\pts \setminus\{q\}).
\]

The left part of the above equation equals $\hv_0(\pts)\cdot \pgk(n)$.
The right-hand part is at least $(n-k)\cdot \pgk(n-1)$. 
Recalling that $\hv_0(\pts) \le \delta n$ leads to
\[ \pgk(n) = \pg(\pts) \ge \frac{n-k}{\hv_0(\pts)} \cdot \pgk(n-1) \ge
\frac{n-k}{\delta n} \cdot \pgk(n-1). \]
\end{proof}

We are now ready to prove Theorem \ref{th:v0}.
We first recall the statement of this result.
\vspace{2mm}

\noindent {\bf Theorem \ref{th:v0}.}
\emph{Let $\pts$ be a set of $n$ points with a convex hull of size $o(n)$. 
Then} $\displaystyle \hv_0(\pts) < 797n/9760$.

\begin{proof}
We start by giving one unit of charge to every isolated ving $(p,G)\in \pts_I\times \gr(\pts)$. 
Vings with positive degrees have no charge.
Let $t$ be the total charge summed over all vings in $\pts_I\times \gr(\pts)$.
Then we may write 
\begin{equation} \label{eq:t}
t = \sum_{G \in \gr(\pts)} v_0(G) = \hv_0(\pts) \cdot pg(\pts). 
\end{equation}

\begin{figure}[h]
\centering
\includegraphics[scale=0.12]{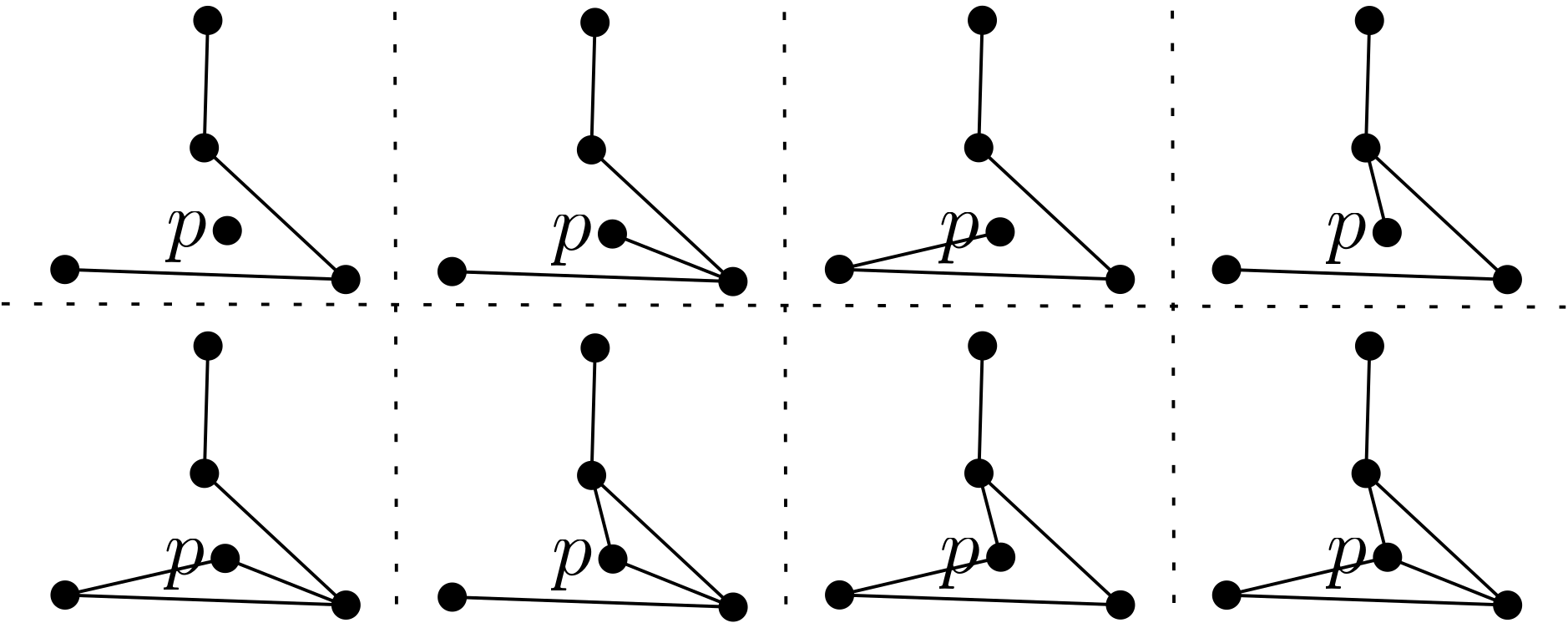}
\caption{ The family of an isolated ving $(p,G)$ of potential 3 has eight members. }
\label{fi:family}
\end{figure}

The \emph{family} of an isolated ving $v=(p,G)\in \pts_I \times G(\pts)$ is the set of all vings obtained by starting with $v$ and connecting $p$ to any number of vertices visible from $p$.
For example, see Figure \ref{fi:family}.
We note that, if $v$ has visibility $i$, then the family consists of $2^i$ vings, all with the same point of $\pts_I$.
We say that \emph{the visibility of the family} is $i$.
We claim that each ving $(q,H)$ belongs to exactly one family. 
Indeed, $(q,H)$ belongs to the family of the isolated ving obtained by removing from $H$ all edges that have $q$ as an endpoint.

We redistribute the charge on the vings of $\pts_I\times G(\pts)$, as follows.
For each family, we take the unit charge of the isolated ving of the family and equally split it among all members of that family. 
That is, a ving in a family of visibility $i$ receives a charge of $1/2^i$.

The visibility of the family of a ving $v$ is $\pot (v)$.
This implies that the charge of $v$ is $1/2^{\pot (v)}$.
We may thus write
\begin{align*} 
t &= \sum_{v \in \pts_I \times \gr(\pts)} \frac{1}{2^{\pot (v)}} \le \sum_{v \in \pts_I \times \gr(\pts) \atop \pot(v)=3} \frac{1}{8} + \sum_{v \in \pts_I \times \gr(\pts) \atop \pot(v)=4} \frac{1}{16} + \sum_{v \in \pts_I \times \gr(\pts) \atop \pot(v)>4} \frac{1}{32}\\[2mm]
&< \frac{1}{8}\cdot \hp_3(\pts)\cdot \pg(\pts) + \frac{1}{16}\cdot \hp_4(\pts)\cdot \pg(\pts) + \frac{1}{32}\cdot (n-\hp_3(\pts)-\hp_4(\pts))\cdot \pg(\pts).
\end{align*}

Assuming that $\hp_3(\pts)=2n/5$ and combining this with Theorem \ref{th:potential} leads to 
\begin{align*} 
t < \frac{1}{8}\cdot \frac{2n}{5}\cdot \pg(\pts) &+ \frac{1}{16}\cdot 2 \left(\frac{37n}{61}-\frac{2n}{5}\right)\cdot \pg(\pts) + \frac{1}{32}\cdot \left(n-\frac{2n}{5}-2\left(\frac{37n}{61}-\frac{2n}{5}\right)\right)\cdot \pg(\pts)  \\[2mm]
&\quad \hspace{19mm}=\frac{n}{20}\cdot \pg(\pts) + \frac{63n}{2440}\cdot \pg(\pts) + \frac{57n}{9760}\cdot \pg(\pts) = \frac{797n}{9760} \cdot \pg(\pts). 
\end{align*}
Together with \eqref{eq:t}, we get that
\[ \hv_0(\pts) < \frac{797n}{9760}. \]

When removing the assumption $\hp_3(\pts)=2n/5$, we may decrease $\hp_3(\pts)$ and increase $\hp_4(\pts)$.
In particular, if we decrease $\hp_3(\pts)$ by $s$, we may increase $\hp_4(\pts)$ by $2s$ (as long as $\hp_4(\pts)\le n$). 
This increases the upper bound on $t$ by at most $-s/8 + 2s/16 =0$.
Thus, the above upper bound for $t$ still holds, and so does the upper bound on $\hv_0(\pts)$.
\end{proof}

We are now ready to prove Theorem \ref{th:PlaneLower}.
We first recall the statement of this result. 
\vspace{2mm}

\noindent {\bf Theorem \ref{th:PlaneLower}}
\emph{Let $\pts$ be a set of $n$ points with a convex hull of size $O(n/\log n)$. 
Then $\displaystyle \pg(\pts) = \Omega(12.24^n)$.}

\begin{proof}
We denote the size of the convex hull of $\pts$ as $k=O(n/\log n)$.
We also set $m = n - n/\sqrt{\log n}$ and $\delta = 797/9760$.
By applying $m$ times Lemma \ref{le:nVSn-1} and Theorem \ref{th:v0}, we obtain that
\begin{align*}
\pg(\pts) &\ge \pg^k(n) \ge \frac{n-k}{\delta \cdot n}\cdot \frac{n-1-k}{\delta \cdot (n-1)}\cdot \frac{n-2-k}{\delta \cdot (n-2)}\cdots \frac{n/\sqrt{\log n}-k+1}{\delta \cdot (n/\sqrt{\log n}+1)}\cdot \pg^k (n/\sqrt{\log n}) \\[2mm]
&=\frac{1}{\delta^m} \cdot \frac{(\frac{n}{\sqrt{\log n}}-k+1)\cdot (\frac{n}{\sqrt{\log n}}-k+2)\cdot (\frac{n}{\sqrt{\log n}}-k+3)\cdots(\frac{n}{\sqrt{\log n}})}{n \cdot (n-1)\cdot (n-2) \cdots (n-k+1)} \cdot \pg^k (n/\sqrt{\log n}).
\end{align*}

By assuming that $n$ is sufficiently large and noting that $\pg^k (n/\sqrt{\log n})>1$, we get that
\begin{align*}
\pg(\pts) &>\frac{1}{\delta^m} \cdot \frac{(n/2\sqrt{\log n})^k}{n^k} = \left(\frac{1}{\delta}\right)^n \cdot \frac{\delta^{n/\sqrt{\log n}}}{(2\sqrt{\log n})^k} \\[2mm]
&=\left(\frac{9760}{797}\right)^n \cdot \frac{\delta^{n/\sqrt{\log n}}}{(2\sqrt{\log n})^k} = \Omega\left(12.24^n\right).
\end{align*}
\end{proof}

\section{An upper bound for $\hp_3(\pts)$} \label{sec:v3}

In this section, we prove the first part of Theorem \ref{th:potential}.
The proof is inspired by \cite[Lemma 2.1]{SSW11}.
\vspace{2mm}

\noindent {\bf Theorem \ref{th:potential}(a).}
\emph{Let $\pts$ be a set of $n$ points with a convex hull of size $o(n)$. 
Then} $\displaystyle \hp_3(\pts) \le 2n/5$.

\begin{proof}
We prove the theorem using a cross-graph charging scheme. 
First, we give a charge of 3 to every potential 3 ving in $\pts_I\times \gr(\pts)$.
Denoting the total charge as $t$, we have that
\begin{equation} \label{eq:TotalCharge} 
t = 3\sum_{G\in \gr(\pts)} p_3(G) = 3\cdot \hp_3(\pts)\cdot \pg(\pts).
\end{equation}

Let \emph{high-potential vings} be the vings of potential at least 4.
Below, we show how to move the entire charge from potential 3 vings to high-potential vings and hings, so that every high-potential ving and hing is charged at most 2. 
This implies that 
\[ t\le 2\sum_{G\in \gr(\pts)} \left(n-p_3(G)\right) = 2\cdot \pg(\pts) \cdot \left(n-\hp_3(\pts)\right). \]

Combining the above with \eqref{eq:TotalCharge} leads to 
\[ \hp_3(\pts) \le 2n/5. \]
It remains to explain how the charge is moved from potential 3 vings.

\parag{Charging rules.}
Consider a potential 3 ving $v=(p,G)\in \pts_I\times \gr(\pts)$. 
As stated above, $v$ starts with a charge of 3.
Then, each unit of charge corresponds to a different edge of the triangle of $v$. 
Let the vertices of the triangle of $v$ be $a,b,c\in\pts$. 
We discuss the charge that corresponds to the edge $(a,b)$, which may not be an edge of $G$.

\begin{figure}[h]
\centering
\includegraphics[scale=0.07]{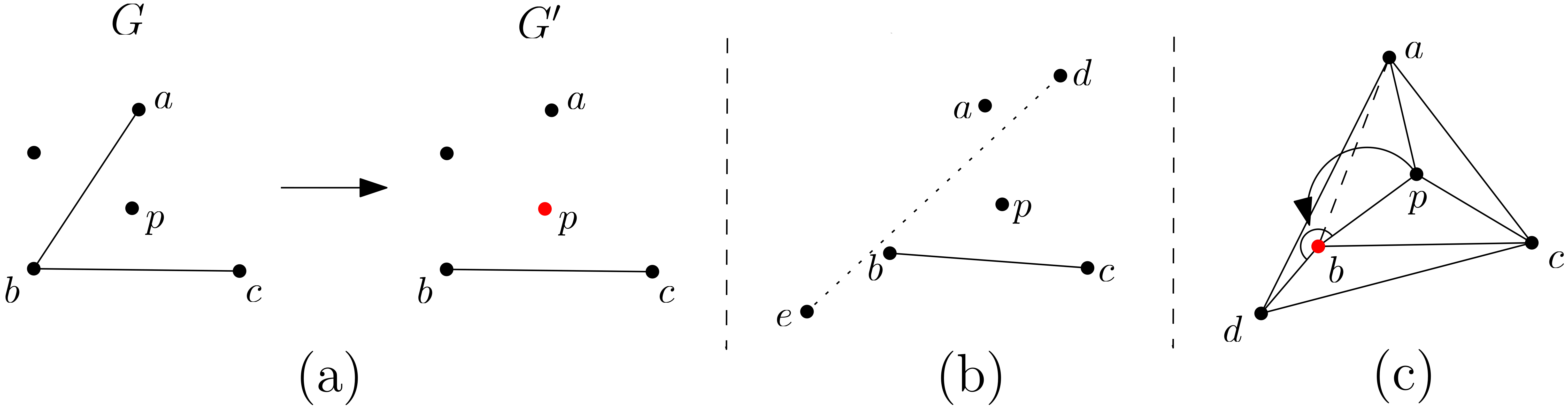}
\caption{ (a) A removal charge from $(p,G)$ to $(p,G')$, corresponding to the edge $(a,b)$. (b) If an edge $(d,e)$ in $T(G)$ intersects $(a,b)$, then at least one of $d$ and $e$ affects $(p,G)$. (c) The edge $(a,b)$ is part of the two triangles $\triangle abp$ and $\triangle abd$ in $T(G)$. }
\label{fi:ChargingRules}
\end{figure}

We first consider the case where the edge $(a,b)$ is in $G$ and removing $(a,b)$ increases the potential of $p$. 
That is, removing $(a,b)$ increases the number of vertices visible from $p$. 
Let $G'$ be the graph obtained by removing $(a,b)$ from $G$. 
We set $v'=(p,G')$ and note that $v'$ is a high-potential ving. 
We move the corresponding unit of charge from $v$ to $v'$.
For an example, see Figure \ref{fi:ChargingRules}(a).
We refer to such a charge transfer as a \emph{removal charge}.

We now assume that we are not in the above case. 
That is, either $(a,b)$ is not in $G$ or removing $(a,b)$ from $G$ does not increase the number of vertices that are visible from $p$.

\begin{lemma} \label{le:NoRemReflex}
If $(a,b)$ does not correspond to a removal charge, then $(a,b)$ forms a reflex angle with at least one of its endpoints.
\end{lemma}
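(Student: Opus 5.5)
The plan is to first show that $(a,b)$ is an edge of $T(G)$ with $\triangle abp$ as one of its faces. Then I look at the face of $T(G)$ on the other side of $(a,b)$. If that face is $\triangle abd$ and the quadrilateral $apbd$ is convex, then $d$ would become visible from $p$, which contradicts either potential $3$ or the assumption that no removal charge occurs.

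\emph{Step 1 (the local structure of $T(G)$ around $p$).} The neighbors of $p$ in $T(G)$ are vertices $q$ for which the segment $pq$ crosses no edge of $G$. Hence each such $q$ is adjacent to $p$ in $G$ or visible from $p$, so every neighbor of $p$ in $T(G)$ affects $(p,G)$ and lies in $\{a,b,c\}$. Since $p$ is an interior point, it has at least three neighbors in the triangulation $T(G)$, so $p$ is adjacent in $T(G)$ to exactly $a,b,c$. Next, suppose some edge $(d,e)$ of $T(G)$ crosses $(a,b)$. It would enter $\triangle abc$, which contains no point of $\pts$ other than $p$. Since it cannot cross $pa$, $pb$, or $pc$, one of its endpoints must be $p$, and then $(d,e)\in\{pa,pb,pc\}$, none of which crosses $(a,b)$. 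This is a contradiction, as in Figure~\ref{fi:ChargingRules}(b). By maximality of the triangulation, $(a,b)\in T(G)$, and $\triangle abp$ is an (empty) face of $T(G)$.

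\emph{Step 2 (the other side of $(a,b)$).} If $(a,b)$ is a hull edge, it forms a reflex angle around both of its endpoints, as noted after Lemma~\ref{le:ReflexGen}, and we are done. Otherwise there is a second face $\triangle abd$ of $T(G)$, with $d$ on the opposite side of $(a,b)$ from $p$. Since $p$ lies inside $\triangle abc$, the vertex $c$ is on $p$'s side, so $d\notin\{a,b,c\}$. When $(a,b)$ is removed from $T(G)$, the angle around it at $a$ is $\angle pad$ and the angle at $b$ is $\angle pbd$. Assume for contradiction that both are less than $\pi$. Then $apbd$ is a convex quadrilateral formed by the union of two faces of $T(G)$. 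Consequently the segment $pd$ lies inside it and crosses no edge of $T(G)$ other than $(a,b)$, and hence no edge of $G$ other than possibly $(a,b)$.

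\emph{Step 3 (contradiction).} If $(a,b)\notin G$, then $d$ is visible from $p$ in $G$, which gives a fourth vertex affecting $(p,G)$ and contradicts $\pot(p,G)=3$. If $(a,b)\in G$, then removing $(a,b)$ makes $d$ visible from $p$. This increases the visibility, so $(a,b)$ corresponds to a removal charge, contrary to the hypothesis. Therefore one of the two angles exceeds $\pi$ (equality is excluded since no three points are collinear), and $(a,b)$ forms a reflex angle around $a$ or around $b$.

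I expect Step 1 to be the main obstacle, namely establishing rigorously that $(a,b)$ actually belongs to $T(G)$, since $T(G)$ is an arbitrary triangulation containing $G$. The argument there rests on the fact that every $T(G)$-neighbor of $p$ affects $p$. Steps 2 and 3 are then a routine convexity argument.
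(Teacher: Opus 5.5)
Your proof is correct and follows the same outline as the paper: show $(a,b)\in T(G)$, dispose of the hull-edge case, then take the second face $\triangle abd$ and argue that if neither $\angle pad$ nor $\angle pbd$ were reflex, $d$ would be visible from $p$. That visibility holds either immediately, contradicting potential 3, or after removing $(a,b)$, which would make it a removal charge. Your Step 1 first pins down the $T(G)$-neighbours of $p$ as exactly $a,b,c$ and then forces any edge crossing $(a,b)$ to end at $p$; this is a more rigorous version of the paper's case analysis on the endpoints of the crossing edge, and your Steps 2--3 spell out the convexity argument that the paper compresses into ``this is equivalent to one of $\angle pad$ and $\angle pbd$ being a reflex angle.''
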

\begin{proof}
First, we claim that $(a,b)$ must exist in $T(G)$.
Indeed, for $(a,b)$ to not be in $T(G)$, this triangulation must contain an edge that intersects $(a,b)$.
This edge cannot be of the form $(p,d)$ for some $d\in \pts$, since $v$ is only affected by $(a,G),(b,G),(c,G)$. 
Similarly, this edge cannot be an edge of the form $(c,d)$, since this also implies that $(d,G)$ affects $v$.
This leaves the case where the edge is of the form $(d,e)$, for another $e\in\pts$, as depicted in Figure \ref{fi:ChargingRules}(b).
This is impossible, since at least one of $d$ and $e$ will affect $p$. 

If $(a,b)$ is an edge of the convex hull of $\pts$, then it forms a reflex angle around both of its endpoints.
It thus remains to consider the case where $(a,b)$ is not part of the convex hull.
In this case, $(a,b)$ is part of two triangles $\triangle abp$ and $\triangle abd$ in $T(G)$.
See Figure \ref{fi:ChargingRules}(c).
By definition, $d$ is not visible from $p$, also after removing $(a,b)$.
This is equivalent to one of $\angle pad$ and $\angle pbd$ being a reflex angle.
\end{proof}

If $(a,b)$ does not correspond to a removal charge, then it forms a reflex angle with one of its endpoints. 
We move the corresponding unit of charge to a vertex around which $(a,b)$ forms a reflex angle.
If $(a,b)$ forms a reflex angle around both of its endpoints, then we choose one endpoint arbitrarily.
We refer to such a charge transfer as a \emph{reflex charge}.
An edge of the convex hull of $\pts$ always leads to a reflex charge.

\parag{The charge of high-potential vings and hings.}
We now analyze the maximum charge that a high-potential ving or hing may have after the above charge transfer.
We begin by studying removal charges more carefully.

\begin{figure}[h]
\centering
\includegraphics[scale=0.073]{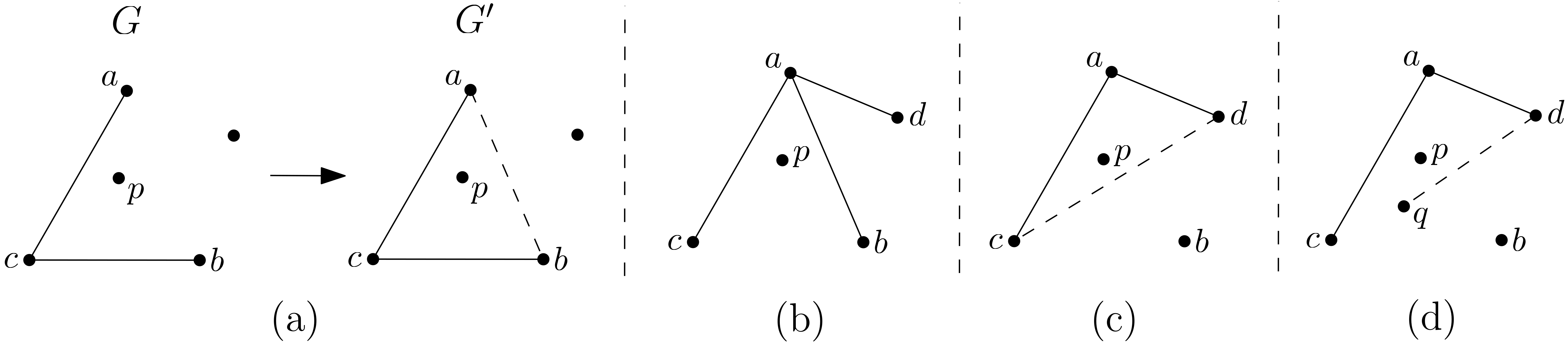}
\caption{(a) A removal charge corresponds to an edge addition that decreases the potential of $p$ to 3. 
(b) The edge $(a,b)$ blocks the visibility between $p$ and $d$. 
(c) The edge $(c,d)$ blocks the visibility between $p$ and $b$. 
(d) If $(d,q)$ blocks the visibility between $p$ and $b$, then $q$ affects $p$.}
\label{fi:removal3}
\end{figure}

\begin{lemma} \label{le:removal3}
Let $v=(p,G)$ be a high-potential ving.
Then $v$ receives at most two removal charges. Additionally, if $v$ receives two removal charges then it does not receive reflex charges.
\end{lemma}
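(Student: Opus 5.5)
Our plan is to characterize the removal charges received by $v=(p,G)$ by working backwards. A removal charge reaching $v$ comes from a potential 3 ving $(p,G\cup\{e\})$, where $e=(a,b)$ is an edge not in $G$ and $p$ is not an endpoint of $e$. Adding $e$ must lower the potential of $p$ to exactly 3, and $e$ must be a side of the triangle of $(p,G\cup\{e\})$. We will therefore describe all edges $e$ whose addition to $G$ turns $p$ into a potential 3 vertex with $e$ a side of its triangle. We will show that there are at most two such edges, and that when there are two, the local picture around $p$ rules out reflex charges.

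\textbf{Step 1.} Fix such an edge $e=(a,b)$ and let $c$ be the third vertex of the triangle. In $G\cup\{e\}$, the only vertices affecting $p$ are $a,b,c$. Adding $e$ cannot create new visibility, so every vertex affecting $p$ in $G$ is either among $a,b,c$ or is blocked by $e$. The blocked vertices lie in the region beyond $e$ as seen from $p$.

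Because $v$ has potential at least 4, at least one vertex $d$ is blocked by $e$. By Lemma \ref{le:Affected3}-style half-plane arguments, we expect the following picture. In $G$, the vertices affecting $p$ are $c$ (opposite $e$), $a$, $b$, and the vertices $d_1,\dots$ seen through the segment $ab$ (Figure \ref{fi:removal3}(b)).

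\textbf{Step 2.} Suppose two distinct edges $e_1,e_2$ both give removal charges to $v$. Each $e_i$ determines a triangle of $p$ with vertex set inside the affecting set $A$ of $v$. We will argue the following:
\begin{itemize}
\item $e_1$ and $e_2$ cannot both block disjoint parts of the view unless each one's triangle uses a vertex that the other blocks.
\item The triangle of $e_1$ has $c_1\in A$ outside the blocked wedge of $e_1$. This forces $A$ to be exactly the three triangle vertices plus the vertices hidden behind $e_1$.
\end{itemize}
Combining the two constraints, $e_2$ must share an endpoint with $e_1$. The configurations then resemble Figure \ref{fi:removal3}(c),(d): an edge $(c,d)$ blocks $b$, and otherwise a blocking edge $(d,q)$ would make $q$ affect $p$, a contradiction. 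A third edge $e_3$ would require $p$ to see in three disjoint angular wedges, each containing a hidden vertex. That contradicts the triangle of each $e_i$ covering the full angle around $p$ with only three visible vertices. This gives at most two removal charges.

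\textbf{Step 3.} If there are exactly two such edges, we expect $A$ to consist of exactly four vertices $a,b,c,d$ forming the quadrilateral of $v$, with $e_1,e_2$ its two diagonals-through-$p$ analogues. For example, $e_1=(a,c)$ hides $b$, and $e_2=(b,d)$ hides $c$, or a similar arrangement.

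Reflex charges go to $v$ only through edges of $T(G)$ incident to $p$ that form a reflex angle around $p$. Such an edge must be a side of the triangle of some potential 3 ving $(q,G)$ in the same graph $G$. By Lemma \ref{le:ReflexGen}, a reflex angle at $p$ means all other edges of $T(G)$ at $p$ lie in a half-plane. We then plan to show that in the two-removal configuration, $p$ is strictly inside the quadrilateral region with visibility in all directions. In that case, every edge of $T(G)$ at $p$ reaches $a,b,c,d$ around the full angle, so no angle exceeds $\pi$, and there are no reflex charges.

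\textbf{Main obstacle.} The hardest step will be Step 2's case analysis pinning down the exact configuration when two removal charges exist. This includes handling concave quadrilaterals, where $p$'s surroundings might still admit a reflex angle. I would first attempt to prove that the two added edges are the two ``blocking'' sides of the quadrilateral through which $p$ sees two different vertices, and that these sides span more than $\pi$ on both sides at $p$.
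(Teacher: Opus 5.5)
Your proposal has a genuine gap in each half of the lemma.

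\textbf{Structure of two removal charges (Step 2).} You conclude that $e_2$ must share an endpoint with $e_1$. This is false; in fact the two edges cross.

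Let $e_1=(a,b)$ with apex $c$, so the affecting set is $A=\{a,b,c\}\cup X_1$, where $X_1\neq\emptyset$ is the set hidden behind $ab$. Check the shared-endpoint options:
\begin{itemize}
\item If $e_2=(a,c)$, then $b$ and every $d\in X_1$ stay visible.
\item If $e_2=(a,d)$ with $d\in X_1$, the wedge behind it lies inside $\angle apb$, so $b$ and $c$ stay visible.
\end{itemize}
In both cases $p$ keeps potential at least $4$, so neither gives a removal charge. The only possibility, up to symmetry, is the paper's configuration: $e_2=(c,d)$ with $d\in X_1$, triangle $acd$, and $b$ hidden behind $cd$. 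Here $c,d$ lie on opposite sides of line $ab$, and $a,b$ lie on opposite sides of line $cd$. Hence the two added edges cross and $adbc$ is a convex quadrilateral. Your own Step 3 example (two diagonals) contradicts your Step 2 conclusion.

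Your argument against a third edge also uses a wrong picture. The hidden wedges are not disjoint: each contains every affecting vertex except the apex and the apex's two angular neighbours. A correct count goes as follows. A removal edge is determined by its apex $z$: it joins the two angular neighbours of $z$ in $A$, and the closed wedge behind it has angle $<\pi$ and contains $A\setminus\{z\}$. If two apexes were not angular neighbours, one wedge would contain the complement of the other, which has angle $>\pi$. Three pairwise-neighbouring apexes cannot exist when $|A|\ge 4$.

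\textbf{Excluding reflex charges (Step 3).} First, $A$ need not have exactly four vertices. Further visible vertices may lie behind both added edges, and the paper allows for this.

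More seriously, your plan to show that no edge of $T(G)$ at $p$ forms a reflex angle cannot work. It confuses the angles between consecutive edges, which are all $<\pi$, with the angle left after deleting an edge, which is what the definition uses. With $e_1=(a,b)$ and $e_2=(c,d)$ as above, $p\in\triangle abc\cap\triangle acd$. If $(p,a)\in T(G)$, deleting it leaves the angle from $c$ to $d$ through $a$, which equals $2\pi-\angle cpd>\pi$. The same holds for $(p,c)$. So reflex angles around $p$ do occur, and this route fails.

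The missing idea, which the paper uses, is to rule out the \emph{senders} rather than the reflex angles. A reflex charge to $v$ comes from a potential 3 ving $(x,G)$ whose triangle has $p$ as a vertex, so $x$ affects $p$ in $G$.
\begin{itemize}
\item Each of $a,b,c,d$ has potential at least $4$ in $G$. Each is a vertex of the triangle of the potential 3 ving $(p,G\cup\{e_i\})$, so Lemma \ref{le:Affected3} gives potential at least $4$ in $G\cup\{e_i\}$. Deleting $e_i$ cannot decrease potential.
\item Every other vertex affecting $p$ lies behind both $(a,b)$ and $(c,d)$, and the paper argues that such a ving cannot have potential $3$ either.
\end{itemize}
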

\begin{proof}
Consider a potential 3 ving $v'=(p,G')$ that makes a removal charge to $v$. Then $G'$ is obtained from $G$ by adding a single edge of the triangle of $v'$.
See Figure \ref{fi:removal3}(a).
Denote the vertices of the triangle of $v'$ as $a,b,c\in \pts$ and let the removed edge be $(a,b)$.

Consider a second potential 3 ving $v''=(p,G'')$ that makes a removal charge to $v$.
Since the triangles of $v'$ and $v''$ cannot be identical, the triangle of $v''$ involves at least one new vertex $d\in \pts$. 
For $d$ not to be visible from $v'$, it must be hidden by the additional edge $(a,b)$.
See Figure \ref{fi:removal3}(b).
For $d$ to be in the triangle of $v''$, it must be an endpoint of an edge $e$ that blocks at least one of $a,b,c$. 
We note that the other endpoint of $e$ cannot be $a$ or $b$. 
It is possible that $e=(c,d)$, blocking either $a$ or $b$. 
See Figure \ref{fi:removal3}(c).
Finally, the other endpoint of $e$ cannot be another vertex $q\in \pts$, since then $q$ would have been visible from $v'$, making $v'$ have potential 4. 
See Figure \ref{fi:removal3}(d).

\begin{figure}[h]
\centering
\includegraphics[scale=0.11]{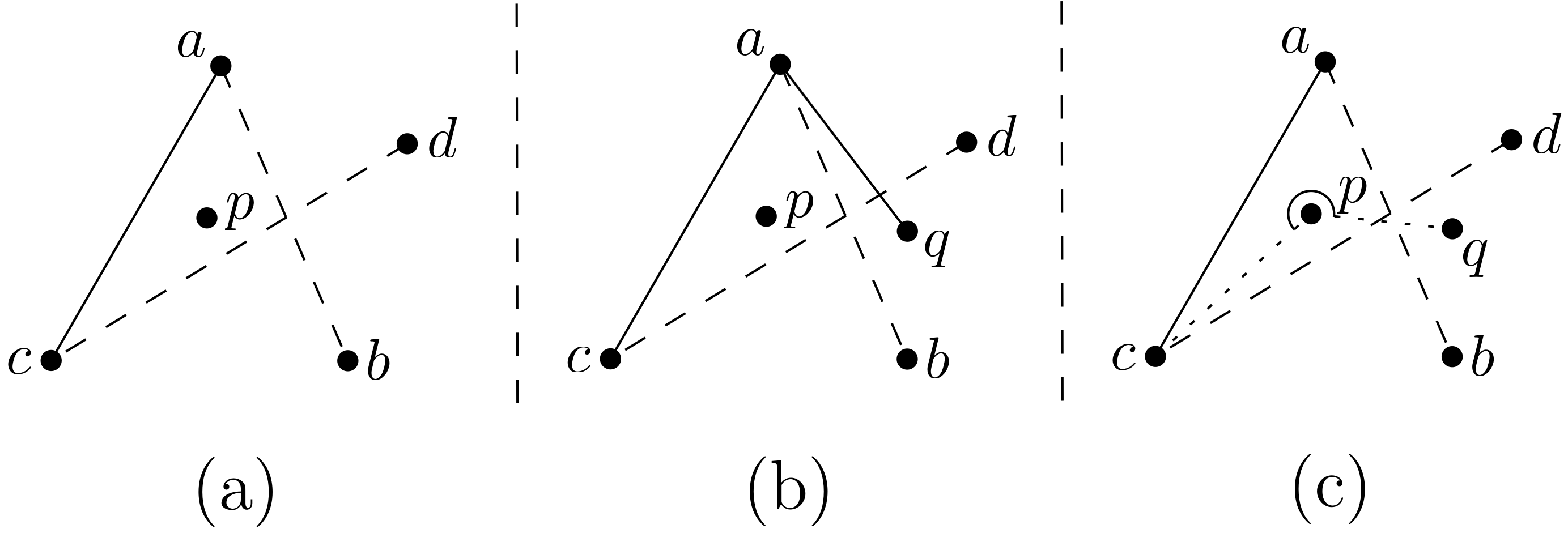}
\caption{ (a) A ving that receives two removal charges. (b) The visibility between $p$ and $q$ is blocked by both $(a,b)$ and $(c,d)$. (c) A reflex angle $\angle cpq$.}
\label{fi:removal3c}
\end{figure}

By the above, the only way for $v$ to receive two removal charges is as depicted in Figure \ref{fi:removal3c}(a).
To have a third removal charge, there should exist a third edge $e'$ whose addition to $G$ decreases the potential of $v$ down to 3. 
This edge must have a new vertex $q\in \pts$ as an endpoint. 
The visibility between $p$ and $q$ must be blocked each of $(a,b)$ and $(c,d)$.
See Figure \ref{fi:removal3c}(b).
Adding $e'$ cannot give $p$ potential 3. 
We conclude that $v$ receives at most two removal charges.

Finally, we assume that $v$ receives two removal charges, as described above. 
For $v$ to also receive a reflex charge, this charge must originate from a potential 3 ving $u$ that is visible from $v$. 
Since $abcd$ is a convex quadrilateral, the potential of each of $a,b,c,d$ is at least 4.
Any other vertex that is visible in $G$ from $p$ is behind both $(a,b)$ and $(c,d)$. 
See Figure \ref{fi:removal3c}(c).
Such a ving also cannot have potential 3. 
Thus, $v$ cannot receive reflex charges.
\end{proof}

By definition, a hing does not receive removal charges.  
Next, we move to study reflex charges.

\begin{figure}[h]
\centering
\includegraphics[scale=0.15]{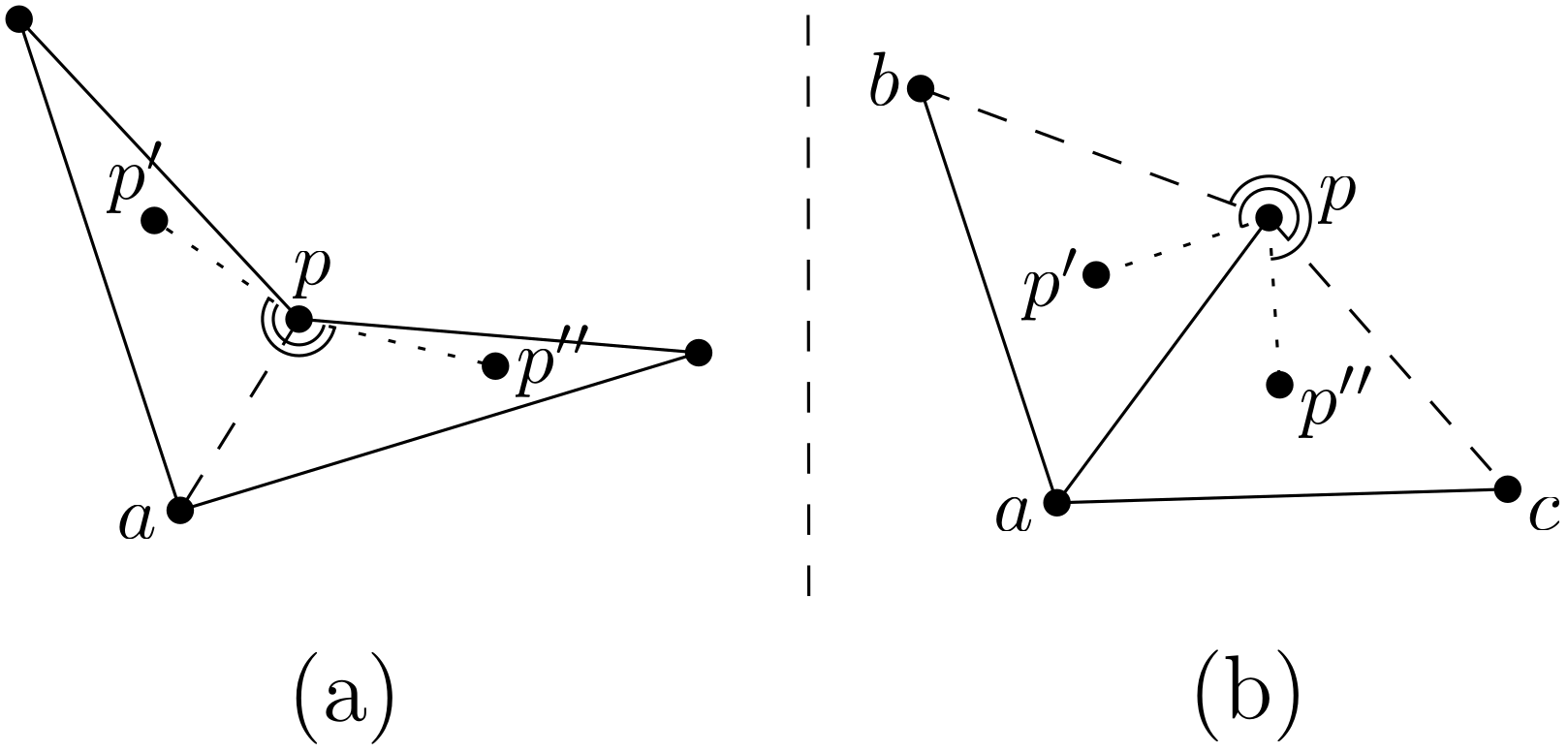}
\caption{(a) The edge $(a,p)$ corresponds to two reflex charges: one from $p'$ and another from $p''$. (b) The ving $p$ receives reflex charges that correspond to the edges $(p,b)$ and $(p,c)$.}
\label{fi:TwoReflex}
\end{figure}

\begin{lemma} \label{le:reflex3}
Let $v=(p,G)$ be a high-potential ving or a hing.
Then $v$ receives at most two reflex charges. 
If $v$ receives two reflex charges then it does not receive removal charges.
\end{lemma}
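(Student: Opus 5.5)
I will bound reflex charges by tracing each one back to a reflex angle of $T(G)$ and then applying Lemma \ref{le:ReflexGen}.

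\textbf{Step 1: locating a reflex charge.} Suppose $v=(p,G)$ receives a reflex charge from a potential 3 ving $u=(q,G)$. Reflex charges do not change the graph, so $u$ lies in the same graph $G$. The charge is associated with an edge of the triangle of $u$, and this edge has $p$ as an endpoint, say $(p,b)$. By the proof of Lemma \ref{le:NoRemReflex}, $(p,b)$ is an edge of $T(G)$, and it forms a reflex angle around $v$.

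\textbf{Step 2: at most two reflex charges.} Lemma \ref{le:ReflexGen} says at most two edges of $T(G)$ form a reflex angle around $v$. So it suffices to show that each such edge $(p,b)$ carries at most one reflex charge. The edge $(p,b)$ is a side of at most two triangles of $T(G)$, namely $\triangle pbq$ and $\triangle pbq'$, one on each side. A potential 3 ving charging through $(p,b)$ must have its triangle among these two.

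Suppose both $q$ and $q'$ are potential 3 vings with these triangles, as in Figure \ref{fi:TwoReflex}(a). Since $(p,b)$ is reflex around $p$, the union of the two triangles has a reflex angle at $p$. I will argue that this forces $q$ to see a point on the other side, so one of them has potential at least 4, a contradiction. If that argument fails, I will switch to a counting fallback. Each of the (at most two) reflex edges charges at most once via a triangle on the side opposite the reflex region. Combined with the consecutive-edges statement of Lemma \ref{le:ReflexGen}, this still gives at most two charges in total, matching Figure \ref{fi:TwoReflex}(b).

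\textbf{Step 3: two reflex charges exclude removal charges.} If $v$ receives two reflex charges, they correspond to two consecutive edges $(p,b)$ and $(p,c)$, both reflex around $p$. So essentially all of the angle around $p$ is covered by the two triangles $\triangle pbq$ and $\triangle pcq'$ together with the reflex structure. A removal charge to $v$ would require the configuration of Lemma \ref{le:removal3}: a potential 3 ving $(p,G')$ with $G'=G\cup\{(a,b)\}$, so that $p$ is surrounded by a triangle after one edge is added. I will show that $p$ being a vertex of charged triangles $\triangle pbq$ with $q$ of potential 3 contradicts $p$ lying strictly inside the empty triangle of $(p,G')$. Specifically, $q$ would affect $p$ in $G'$ unless it is one of $a,b,c$. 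I will then run through the few cases in which the triangle vertices coincide, using Lemma \ref{le:Affected3}. For hings, removal charges never occur, so this step is needed only for vings.

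\textbf{Main obstacle.} The hardest step is Step 2: showing that one reflex edge cannot receive charges from both sides. The geometric case analysis of which side the reflex angle lies on is delicate. My first attempt will be to show that the potential 3 triangle must lie on the non-reflex side of $(p,b)$, since on the reflex side the reflex vertex $p$ exposes additional visible vertices.
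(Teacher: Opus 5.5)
\textbf{The gap is in Step 2.} You reduce the count to showing that each reflex edge carries at most one reflex charge, but that claim is false: a single edge can carry two.

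Take $b=(0,0)$, $p=(3,0)$, $c=(7,3)$, $c'=(7,-3)$, $q=(4,1)$, $q'=(4,-1)$. Let $G$ consist of the sides of $\triangle pbc$ and $\triangle pbc'$, and enclose everything in a large hull. Then $(q,G)$ and $(q',G)$ are potential 3 vings with triangles $\triangle pbc$ and $\triangle pbc'$. Deleting $(p,b)$ does not let $q$ see $q'$ or $c'$, since both segments cross $(p,c)$. So the $(p,b)$-unit of $q$ is not a removal charge, and symmetrically for $q'$. In $T(G)$ the faces on $(p,b)$ are $\triangle pbq$ and $\triangle pbq'$. The angle at $p$ between $(p,q)$ and $(p,q')$ through $b$ is $270^\circ$, while the corresponding angle at $b$ is less than $\pi$. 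Hence both units go to the high-potential ving $(p,G)$ through the same edge.

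This is exactly Figure \ref{fi:TwoReflex}(a), which you set out to refute. The contradiction you hope for cannot exist: the reflex angle at $p$ in $\triangle pbq\cup\triangle pbq'$ is precisely what hides $q'$ from $q$. That is the very condition that makes both units reflex charges rather than removal charges. Your fallback does not repair this. Both faces on $(p,b)$ lie inside the reflex angle obtained by deleting $(p,b)$, so ``the side opposite the reflex region'' selects neither of them. And when $(p,b)$ is the only reflex edge, ``at most one charge per edge'' is just the false claim again.

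\textbf{The missing idea} is to allow the double charge and show that it exhausts the possibilities. Suppose $(p,b)$ receives charges from $q$ and $q'$. By Lemma \ref{le:ReflexGen}, any further reflex edge at $p$ must be consecutive to $(p,b)$ around $p$ in $T(G)$, so it is $(p,q)$ or $(p,q')$. But $q$ lies in the interior of its empty triangle $\triangle pbc$, so $T(G)$ contains the star from $q$ inside that triangle. The rotational neighbors of $(p,q)$ at $p$ are therefore $(p,b)$ and $(p,c)$, which span $\angle bpc<\pi$. Thus $(p,q)$ is not reflex around $p$, and likewise $(p,q')$, so the total is still two. If no edge carries two charges, Lemma \ref{le:ReflexGen} finishes the count directly.

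\textbf{Step 3 inherits the same omission.} It only treats two charges arriving through two distinct consecutive edges. The configuration with both charges through one edge must also be shown to admit no removal charge. Beyond that, Step 3 is a plan rather than an argument. For instance, your assertion that $q$ still affects $p$ in $G'$ needs a reason why the added edge cannot cross the segment $pq$.
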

\begin{proof}
Lemma \ref{le:ReflexGen} states that at most two edges form a reflex angle around $v$.
This does not immediately imply that $v$ receives at most two reflex charges, since the same edge may correspond to two reflex charges.
For example, see Figure \ref{fi:TwoReflex}(a).

Assume that $v$ receives two reflex charges that correspond to the same edge $(a,p)$, as in Figure \ref{fi:TwoReflex}(a).
Let the vertices of the two charging potential 3 vings be $p'$ and $p''$.
By Lemma \ref{le:ReflexGen}, the only other edges that may potentially form a reflex angle around $p$ in $G$ are $(p,p')$ and $(p,p'')$ (these two edges must exist in $T(G)$.)
Since $(p',G)$ and $(p'',G)$ are potential 3 vings, neither edge forms a reflex angle around $p$.

It remains to show that, when $v$ receives two reflex charges, it cannot receive removal charges.
That is, we cannot decrease the potential of $v$ down to 3 by adding one additional edge.
By inspecting Figures \ref{fi:TwoReflex}(a) and \ref{fi:TwoReflex}(b), we see that this indeed must be the case.
\end{proof}

Lemmas \ref{le:removal3} and \ref{le:reflex3} imply that every high potential ving and every hing receive a charge of at most 2.
This completes the proof of the theorem.
\end{proof}

\section{An upper bound for $\hp_3(\pts)+\hp_4(\pts)/2$} \label{sec:v34}

In this section we derive an upper bound on $\hp_3(\pts)+\hp_4(\pts)/2$, when $\pts$ does not have a large convex hull.
This requires a longer and more technical analysis than the one of Section \ref{sec:v3}. 
\vspace{2mm}

\noindent {\bf Theorem \ref{th:potential}(b).} 
\emph{Let $\pts$ be a set of $n$ points with a convex hull of size $o(n)$. Then} $\displaystyle \hp_3(\pts) +\hp_4(\pts)/2< 37n/61$.

\begin{proof}
We prove the theorem by using a cross-graph charging scheme. 
First, we give a charge of 3 to every ving in $\pts\times \gr(\pts)$ of potential 3, and a charge of $3/2$ to every ving of potential 4.
Denoting the total charge as $t$, we have that
\begin{equation} \label{eq:TotalCharge34} 
t = 3\sum_{G\in \gr(\pts)} (p_3(G) + p_4(G)/2) = 3\cdot (\hp_3(\pts)+\hp_4(\pts)/2)\cdot \pg(\pts).
\end{equation}

Let \emph{high-potential vings} be the vings of potential at least 5.
Below, we redistribute the charge, so that every high-potential ving is charged at most $37/8$ and every hing at most $2$. 
Every potential 4 ving is left with a charge of at most $7/4$ and every potential 3 ving has no charge.
The number of high-potential vings and hings in a graph $G\in \gr(\pts)$ is $n-p_3(G)-p_4(G)$. 
We get that
\begin{align*} 
t &< \frac{37}{8}\sum_{G\in \gr(\pts)} \left(n-p_3(G)-p_4(G)\right) + \frac{7}{4}\sum_{G\in \gr(\pts)} p_4(G) \\[2mm]
&\le \pg(\pts) \cdot \left(\frac{37n}{8}-\frac{37}{8}\cdot \hp_3(\pts)-\frac{23}{8}\cdot \hp_4(\pts)\right) \\[2mm]
&< \pg(\pts) \cdot \left(\frac{37n}{8}-\frac{37}{8}\cdot \hp_3(\pts)-\frac{37}{16}\cdot \hp_4(\pts)\right) = \pg(\pts) \cdot \left(\frac{37n}{8}-\frac{37}{8}\left( \hp_3(\pts)+\frac{\hp_4(\pts)}{2}\right)\right).
\end{align*}

Combining the above with \eqref{eq:TotalCharge34} leads to 
\[ \hp_3(\pts) + \frac{\hp_4(\pts)}{2}< \frac{37n}{61}, \]
as asserted by the theorem.
It remains to describe the charge redistribution.

\parag{Charging rules.}
We redistribute the charge in two sequential stages. 
In the first stage, we move charge from potential 3 vings, as described in the proof of Theorem \ref{th:potential}(a).
In Section \ref{sec:v3}, we proved that each ving of potential at least 4 receives at most 2 units from such a redistribution.
Thus, after the first stage of charge redistribution,  the charge of each potential 4 ving is $3/2$, $5/2$, or $7/2$.
Each high potential ving has at most two units of charge.
Potential 3 vings have no charge.

We now study the second stage of redistributing charge. 
Consider a potential 4 ving $v=(p,G)\in \pts_I\times \gr(\pts)$. 
We evenly divide the charge of $v$ among the four edges of the quadrilateral of $v$ (some of these edges may not be in $G$).
That is, each edge corresponds to a charge transfer of at least $3/8$ and at most $7/8$. 
The following charging rules describe how the charge is moved according to the properties of the corresponding edge.

Let the vertices of the quadrilateral of $v$ be $a,b,c,d\in\pts$. 
We discuss the charge that corresponds to the edge $(a,b)$, which may not be an edge of $G$.
We define a \emph{removal charge} as in the proof of Theorem \ref{th:potential}(a):
Assume that the edge $(a,b)$ is in $G$ and let $G'$ be the graph obtained by removing $(a,b)$ from $G$. 
If $(p,G')$ has a higher potential than $v$, then we move the corresponding charge to $(p,G')$. 

\begin{figure}[h]
\centering
\includegraphics[scale=0.102]{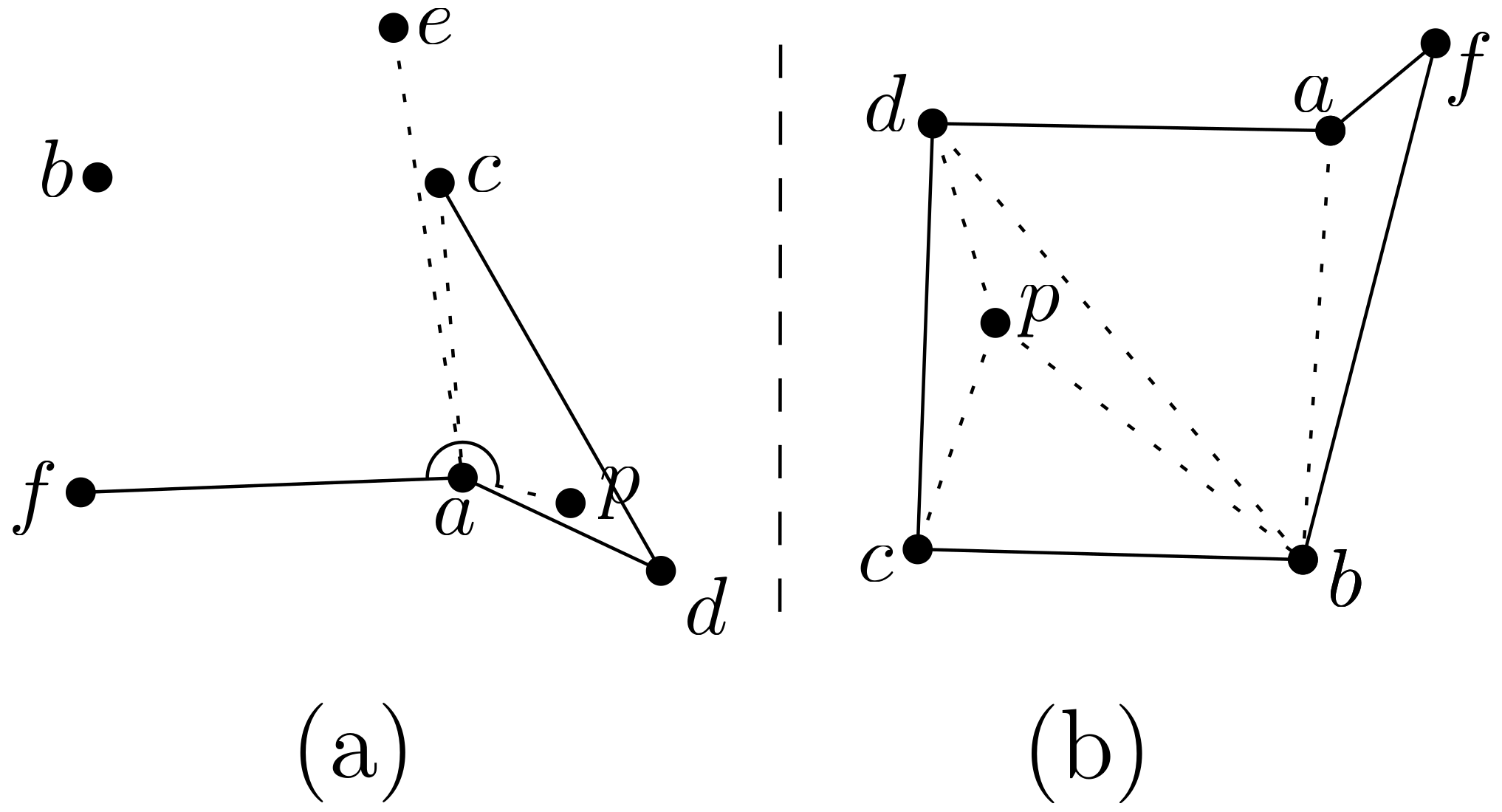}
\caption{Dotted edges are in $T(G)$ but not in $G$. (a) The angle $\angle fap$ is broken by three edges: $(a,c),(a,e),(a,b)$. (b) While we would like the potential 4 ving $(p,G)$ to have a reflex charge corresponding to $(a,b)$, the edge $(p,a)$ is not in $T(G)$. }
\label{fi:Pseudo-Reflex}
\end{figure}

In Section \ref{sec:v3}, Lemma \ref{le:NoRemReflex} states that, when $(a,b)$ does not correspond to a removal charge, this edge forms a reflex angle around $a$ or $b$.
This argument does not extend to potential 4 vings. 
Assume that $(a,b)$ is not a boundary edge. 
Let $\triangle abf$ be the triangle in $T(G)$ that has $(a,b)$ on its boundary and does not intersect the interior of the quadrilateral of $v$.
Since $(a,b)$ does not correspond to a removal charge, either $\angle fap$ or $\angle fbp$ is a reflex angle. 
Without loss of generality, we assume that $\angle fap$ is the reflex angle. 
It is possible that $\angle fap$ is broken by more than a single edge in $T(G)$.
See Figure \ref{fi:Pseudo-Reflex}(a).
In addition, it is possible that $(a,p)$ is not an edge of $T(G)$. 
See Figure \ref{fi:Pseudo-Reflex}(b).

In either of the above cases, we keep the corresponding charge on $(p,G)$.
We refer to this as a \emph{self charge} of \emph{type (i)}.
Lemma \ref{le:NoRemReflex} holds for potential 3 vings, as before. 
We now prove a variant of Lemma \ref{le:NoRemReflex} for potential 4 vings. 

\begin{figure}[h]
\centering
\includegraphics[scale=0.075]{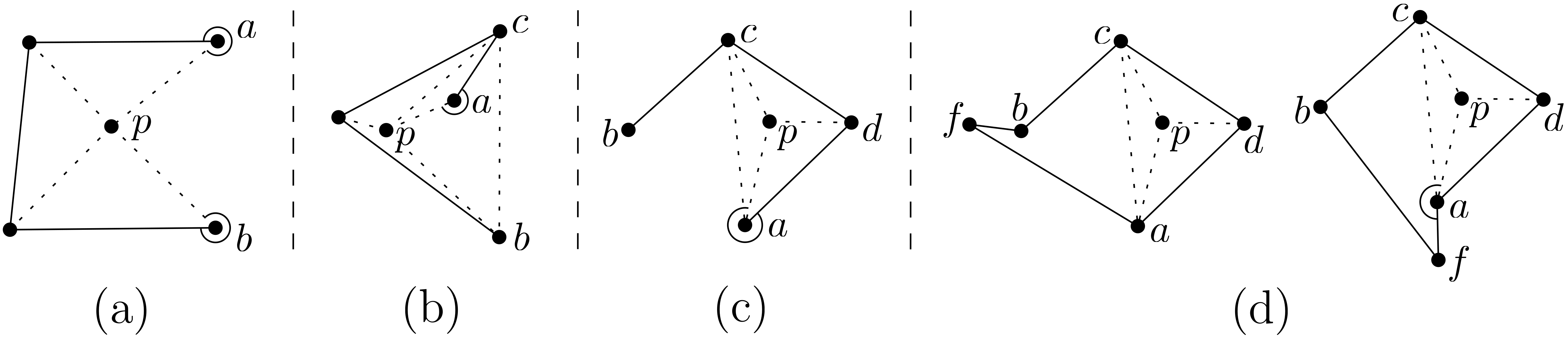}
\caption{Dotted edges are in $T(G)$ but not in $G$. (a) The edges $(p,a)$ and $(p,b)$ are in $T(G)$. (b) The third vertex of the triangle involving $(a,b)$ may be a vertex of the quadrilateral of $v$. (c) The edge $(a,c)$ breaks the reflex angle $\angle pad$, causing type (i) self charge.
 (d) When $(a,c)\in T$ and $(a,b)$ is not a hull edge, we have a type (i) self charge.}
\label{fi:Lemma43a.png}
\end{figure}

\begin{lemma} \label{le:RemRefSelf}
Let $v=(p,G)$ be a potential 4 ving. 
Let $(a,b)$ be an edge of the quadrilateral of $v$. \\[2mm]
(a) If both $(p,a)$ and $(p,b)$ are in $T(G)$, then $(a,b)$ corresponds to a reflex or removal charge. \\[2mm]
(b) Without the assumption of part (a), the edge $(a,b)$ corresponds to a removal charge, a reflex charge, or a type (i) self charge.
\end{lemma}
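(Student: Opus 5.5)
The plan is to mimic the proof of Lemma \ref{le:NoRemReflex}, replacing the triangle of a potential 3 ving by the quadrilateral of $v$. Throughout, assume that $(a,b)$ does not correspond to a removal charge; otherwise there is nothing to prove.

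The first step is to show that $(a,b)$ is an edge of $T(G)$. Suppose some edge of $T(G)$ crosses $(a,b)$. By the definition of the quadrilateral of $v$, this quadrilateral contains no points of $\pts$ and no edge crosses into it. The crossing edge would then have an endpoint $d$ on the far side of $(a,b)$ that is seen from $p$ through the segment $(a,b)$, or the edge would itself enter the region between $p$ and $(a,b)$. I would repeat the argument illustrated in Figure \ref{fi:ChargingRules}(b). Sweep a ray from $p$ across the segment $(a,b)$, and take the first edge hit by the sweep that is not $(a,b)$. That edge is an edge of $G$ or $T(G)$ whose endpoint is visible from $p$, or is adjacent to $p$, and that endpoint is not among $a,b,c,d$. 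This would make $\pot(v)\ge 5$, a contradiction.

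With $(a,b)\in T(G)$, the next step is a case split. If $(a,b)$ is a hull edge, it forms a reflex angle around both endpoints, so it gives a reflex charge. Otherwise, let $\triangle abf$ be the triangle of $T(G)$ on the side of $(a,b)$ away from the quadrilateral. Since removing $(a,b)$ (or noting that $(a,b)\notin G$) does not make $f$ visible from $p$, the segment $(p,f)$ must leave the union of the quadrilateral and $\triangle abf$. As in Lemma \ref{le:NoRemReflex}, this forces $\angle fap$ or $\angle fbp$ to be reflex; say it is $\angle fap$.

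For part (a), $(p,a)$ is an edge of $T(G)$. I would check that the only edge of $T(G)$ strictly inside $\angle fap$ on the quadrilateral side is $(a,b)$. The reason is that the region of the quadrilateral near $a$ between $(a,p)$ and $(a,b)$ contains no edges, since the quadrilateral is empty and $(p,a)$ bounds it. Consequently, removing $(a,b)$ leaves an angle at $a$ between $(a,f)$ and $(a,p)$ exceeding $\pi$. So $(a,b)$ forms a reflex angle around $a$, which gives a reflex charge.

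For part (b), the same argument yields a reflex charge unless one of the two degenerate situations of Figure \ref{fi:Pseudo-Reflex} occurs. In the first, $(p,a)\notin T(G)$. In the second, the angle $\angle fap$ is split by more than one edge, for example the diagonal $(a,c)$ of a concave quadrilateral. Both situations are, by definition, type (i) self charges.

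The main obstacle is the first step, namely justifying $(a,b)\in T(G)$ for a possibly concave quadrilateral. There, $(a,b)$ may not be directly "faced" by $p$ along its whole length. I would handle this by noting that the consecutive-in-rotation definition of the quadrilateral edges ensures that the triangle $\triangle pab$ (or its visible part) is empty. I would then run the sweep argument only inside the wedge $\angle apb$.
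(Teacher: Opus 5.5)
\textbf{There is a gap: your first step is false.} The claim that $(a,b)$ is always an edge of $T(G)$ does not hold. Your sweep argument treats edges of $T(G)$ as if their endpoints had to affect $v$. But the potential of $v$ is computed in $G$. An edge of $T(G)\setminus G$ may cut through the quadrilateral (the paper already lets diagonals do this), and its endpoints can be hidden from $p$ by edges of $G$.

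Here is a concrete counterexample. Take $p=(0,0)$, $a=(0,6)$, $b=(1,1)$, $c=(0.1,-1)$, $d=(-1,1)$, $x=(-4,2)$, $y=(4,2)$, and $G=\{(b,c),(c,d)\}$.
\begin{itemize}
\item The point $p$ is interior; $x$ is hidden from $p$ by $(c,d)$ and $y$ by $(b,c)$. So $v=(p,G)$ has potential 4, with quadrilateral $abcd$.
\item The segment $(x,y)$ crosses no edge of $G$, so the arbitrarily fixed $T(G)$ may contain it.
\item In that case $(a,b)\notin T(G)$, since $(x,y)$ crosses $(a,b)$ at $(0.8,2)$. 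It also crosses $(p,a)$ and $(d,a)$.
\end{itemize}
Your fix in the ``main obstacle'' paragraph relies on $\triangle pab$ being empty. That emptiness holds only for edges of $G$, not of $T(G)$, so it cannot rescue the step.

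\textbf{What survives, and how to repair it.} In part (a) it is true that $(a,b)\in T(G)$, but for a different reason. The edges $(p,a)$ and $(p,b)$ of $T(G)$ seal off $\triangle pab$. Any edge crossing $(a,b)$ would therefore have to end at $p$, and its other endpoint would then be visible, a contradiction. This is also why the potential 3 version works: $p$ is an interior vertex of $T(G)$ whose neighbours all affect $v$, so it has degree at least 3 there and is joined to all of $a,b,c$. With this justification, the rest of your argument for (a) is the paper's.

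One small point in (a): the third vertex $f$ may itself be a quadrilateral vertex, in which case it is visible from $p$. It then lies outside the wedge $\angle apb$, so $pafb$ is non-convex anyway and the reflex conclusion still holds.

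In part (b), the example shows that $(a,b)\notin T(G)$ genuinely occurs. A crossing edge must leave $\triangle pab$ through $(p,a)$ or $(p,b)$, so this happens only when one of these is missing from $T(G)$. You must classify this case explicitly. The natural choice is a type (i) self charge, i.e.\ the ``$(a,p)\notin T(G)$'' situation.

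The paper's own proof of (b) also silently assumes that $(a,b)$ lies on a triangle of $T(G)$. However, it makes the rest of the case analysis precise through a degree observation you skip. Since $p$ has at least three $T(G)$-neighbours among $a,b,c,d$, exactly one of $(p,a),(p,b)$ is in $T(G)$, say $(p,a)$. Then the diagonal $(a,c)$ is in $T(G)$, and that diagonal is what breaks the reflex angle at $a$. This is what turns your ``by definition a type (i) self charge'' into an actual argument.
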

\begin{proof}
(a) We assume that $(p,a)$ and $(p,b)$ are in $T(G)$, as in Figure \ref{fi:Lemma43a.png}(a).
If $(a,b)$ is an edge of the convex hull of $\pts$, then $v$ makes a reflex charge to $(a,G)$ or $(b,G)$ (arbitrarily chosen). 
We thus assume that $(a,b)$ is not a boundary edge.
Let $c$ be the third vertex of the triangle of $T(G)$ that contains $(a,b)$ but not $p$.
It is possible that $c$ is a vertex of the quadrilateral of $(p,G)$, as depicted in Figure \ref{fi:Lemma43a.png}(b).
If $(a,b)$ does not correspond to a removal charge, then either $\angle pac$ or $\angle pbc$ is a reflex angle.
By definition, $(a,b)$ corresponds to a reflex charge of $v$.

(b) In this case, $T(G)$ contains at most one of $(p,a), (p,b)$. 
In $T(G)$, the vertex $p$ is connected to at least three vertices of the quadrilateral of $v$. 
Thus, exactly one of $(p,a),(p,b)$ exists  in $T(G)$.
Without loss of generality, we assume that $(p,a)$ is in $T(G)$, but not $(p,b)$.
We first assume that $(a,b)$ is part of the convex hull of $\pts$.
Then $(a,c)$ breaks the reflex angle $\angle pab$, causing type (i) self charge.
See Figure \ref{fi:Lemma43a.png}(c).

Finally, we assume that $(a,b)$ is part of a triangle in $T(G)$ whose third vertex is not $c$. 
Let $f$ be the third vertex of that triangle.
If $(a,b)$ does not correspond to a removal charge, then either $\angle pbf$ or $\angle paf$ is a reflex angle.
In the former case we have a self charge, since $b$ is not connected to $p$ in $T(G)$.
In the latter, we have a self charge since $\angle paf$ is broken by $(a,c)$.
See Figures \ref{fi:Lemma43a.png}(d).
\end{proof}

\begin{figure}[h]
\centering
\includegraphics[scale=0.085]{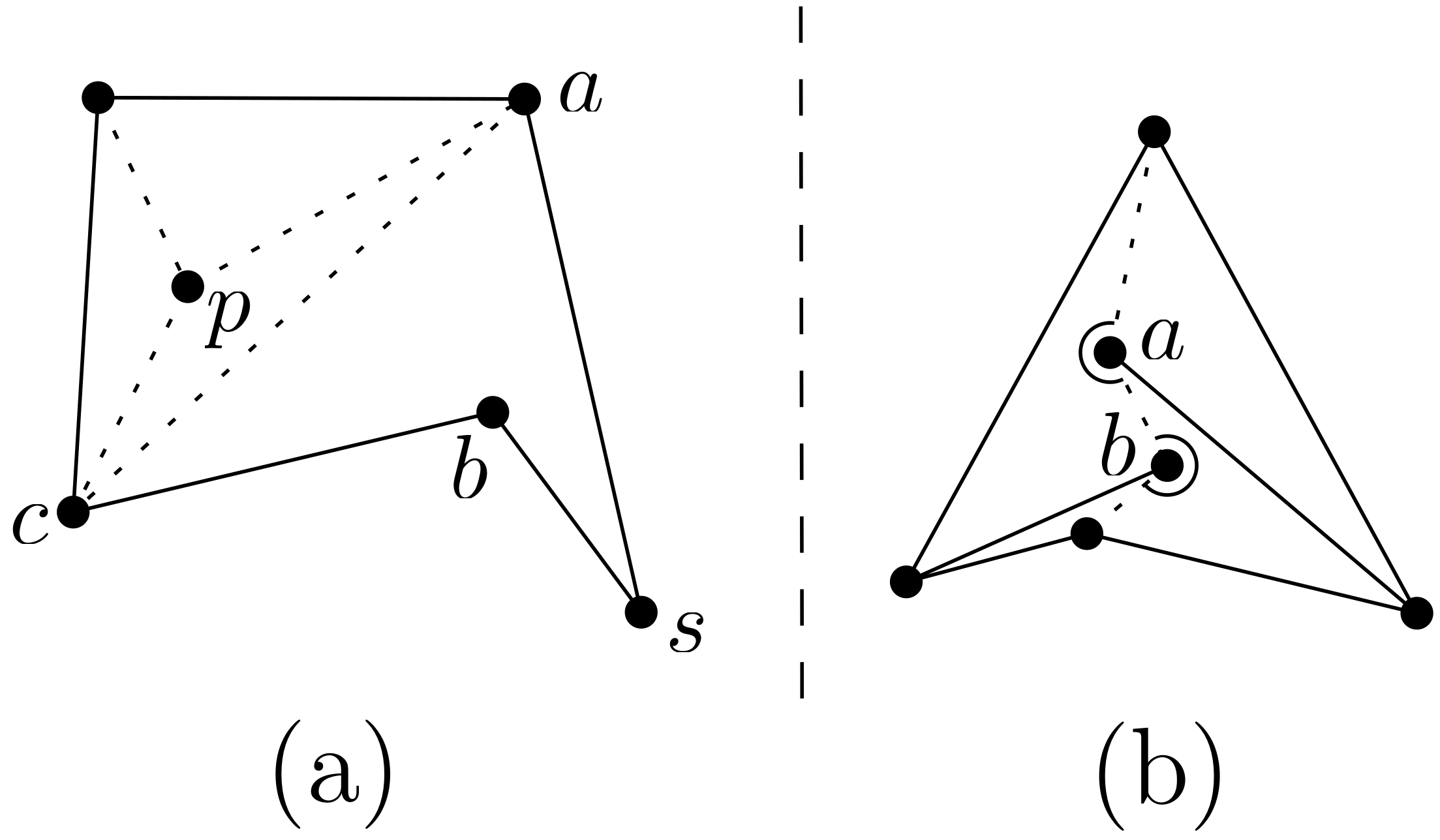}
\caption{The dotted edges are in $T(G)$ but not in $G$. (a) The potential 4 ving $p$ has a type (i) self charge corresponding to $(a,b)$. (b) The potential 4 vings $(a,G)$ and $(b,G)$ charge each other, leading instead to type (ii) self charges.}
\label{fi:TypeI}
\end{figure}

By Lemma \ref{le:RemRefSelf}(a), type (i) self charges cannot occur when $p$ has degree 4 in $T(G)$. 
That is, such self charges may only occur when a diagonal of the quadrilateral of $v$ appears in $T(G)$, leaving $p$ connected to three vertices of the quadrilateral. 
See Figure \ref{fi:TypeI}(a).
Then, type (i) self charges may correspond to the two edges of the quadrilateral of $v$ that are not part of the triangle that contains $p$ in $T(G)$.
In Figure \ref{fi:TypeI}(a), these are the edges $(a,b)$ and $(b,c)$.
This implies that every 4-ving has at most two type (i) self charges.

There is another issue that did not exist in the proof of Theorem \ref{th:potential}(a): A potential 4 ving may make a reflex charge to a ving of potential 3 or 4. 
Figure \ref{fi:TypeI}(b) depicts a problematic case where two potential 4 vings charge each other. 
If a potential 4 ving $(p,G)$ makes a reflex charge to a potential 3 or 4 ving, then we keep this charge on $(p,G)$. 
We refer to such a case as a \emph{self charge of type (ii)}. 

Due to self charges, a potential 4 ving $v$ may have charge left on itself after the second stage of charge redistribution. 
When that happens, we increase each of the charges that $v$ makes to be of size $7/8$ (or a smaller amount, if that suffices to remove all the remaining charge of $v$). 
After that, $v$ may still have remaining charge.

\parag{After the second stage of charge redistribution.} 
We now study the maximum charge that vings and hings may have after the above process.
We begin with high-potential vings. 

\begin{figure}[h]
\centering
\includegraphics[scale=0.11]{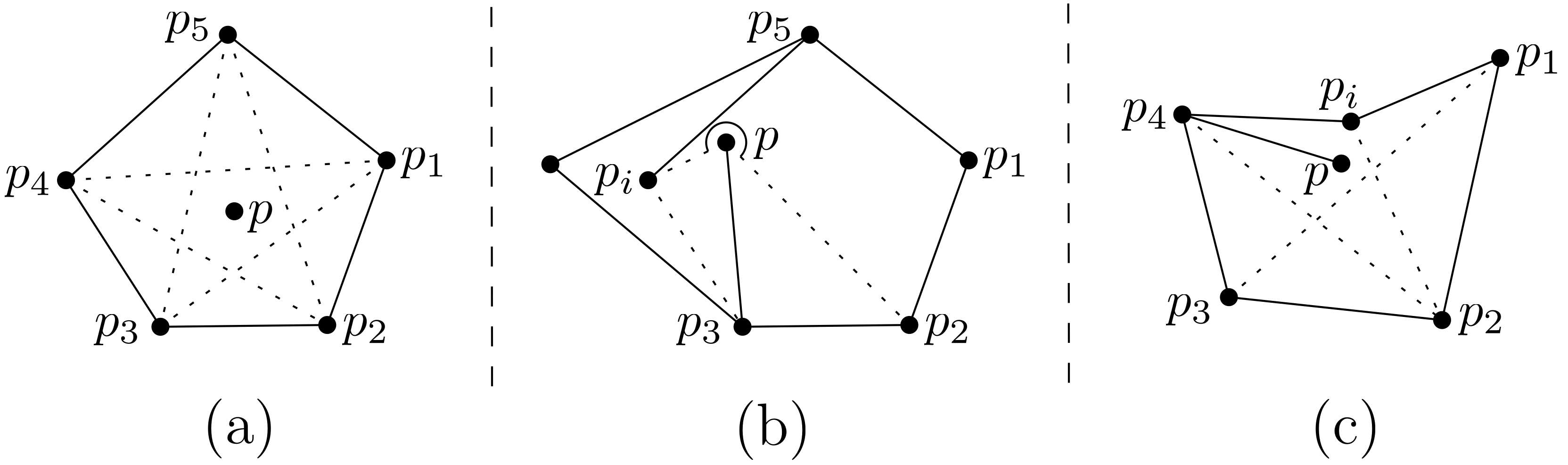}
\caption{ (a) Adding a diagonal gives $p$ potential 3 or 4. (b) The ving $(p_i,G)$ is affected by $p$ and by its two neighbors on the pentagon. (c) At most three diagonals can be added.}
\label{fi:Lemma52a}
\end{figure}

\begin{lemma} \label{le:case4HighPot}
A high-potential ving receives at most five charges from vings of potential 3 and 4. 
\end{lemma}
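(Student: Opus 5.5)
We would prove the lemma by splitting the charges a high-potential ving $v=(p,G)$ receives into removal charges and reflex charges. Self charges stay on the charging ving, so they never reach $v$. We then bound each kind separately, and then bound their combination.

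\emph{Reflex charges.} A reflex charge reaching $v$ corresponds to an edge $(p,x)$ of $T(G)$ that forms a reflex angle around $v$. By Lemma \ref{le:ReflexGen}, there are at most two such edges, and if there are two they are consecutive around $p$. The charging ving lies on one side of such an edge and is affected by $p$. Following the argument of Lemma \ref{le:reflex3}, a single reflex edge $(p,x)$ can be charged by at most two vings: one on each side of $(p,x)$, the vertex of the face of $T(G)$ incident to $(p,x)$ on that side. This holds also when the charging vings have potential 4, because the charge is sent along the edge of the quadrilateral that is bounded by the reflex angle. Hence $v$ receives at most $2\cdot 2=4$ reflex charges. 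If the two reflex edges are consecutive, the face between them is shared, so the count drops to at most three.

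\emph{Removal charges.} A removal charge to $v$ comes from a graph $G'=G\cup\{e\}$ in which $(p,G')$ has potential 3 or 4. Since $\pot(v)\ge 5$, the added edge $e$ must block at least one vertex visible from $p$. Potential 4 requires that it block at least $\pot(v)-4$ vertices, and potential 3 that it block $\pot(v)-3$. Figure \ref{fi:Lemma52a} suggests the argument. The visible and adjacent vertices of $p$ lie in angular order around $p$. Each such $e$ must be a chord joining two affecting vertices $p_i,p_j$, or an affecting vertex and a hidden one, that hides the vertices angularly between them. Since the resulting polygon has only 3 or 4 vertices, the added chords must be pairwise non-crossing diagonals of the visibility polygon of $p$. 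The main case is $\pot(v)=5$, where the affecting vertices form a pentagon-like region around $p$. The relevant chords are the diagonals $(p_{i-1},p_{i+1})$ that cut off a single vertex $p_i$. Such a chord is valid only if $p_i$ is not needed to keep the polygon star-shaped from $p$. Convexity/reflexity then shows that at most three of the five such diagonals can be added, as in Figure \ref{fi:Lemma52a}(c). For $\pot(v)\ge 6$, a single added edge must hide at least two vertices, and the analogous count gives at most a constant, which we expect to be at most three as well. We would also use the fact that chords hiding vertices reduce reflex structure: a vertex $p_i$ whose removal chord exists forces the angle at $p$ toward $p_i$ to be convex.

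\emph{Combining.} The main obstacle is the combined bound: $4$ reflex plus $3$ removal is $7$, not $5$, so we need a trade-off in the spirit of the final parts of Lemmas \ref{le:removal3} and \ref{le:reflex3}. The plan is to show that each reflex angle at $p$ kills the removal chords near it. If $(p,x)$ forms a reflex angle, the vertices on both sides of $x$ are angularly separated by more than $\pi$ as seen from $p$. No chord can then cut off $x$ or its neighbors without passing behind $p$. Each reflex edge that receives $r\in\{1,2\}$ charges should therefore eliminate at least $r$ potential removal chords. We would carry this out by a case analysis on the number of reflex edges at $p$ (zero, one, or two consecutive ones), checking in each case that the number of reflex charges plus the number of available removal chords is at most five. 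We expect the case of one reflex edge charged from both sides together with $\pot(v)=5$ to be the tightest, requiring an explicit picture argument.
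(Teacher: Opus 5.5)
\textbf{There is a genuine gap: the removal count is false, the trade-off mechanism is false, and the idea that actually drives the paper's proof is missing.}

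\emph{The removal count.} You claim at most three removal charges, but removal charges to $v$ come from \emph{different} graphs $G\cup\{e\}$, so the chords need not be compatible with one another. Take the pentagon $p_1\cdots p_5$ convex with its sides in $G$, no edge of $G$ at $p$, and $p$ in the small central pentagon cut out by the diagonals. Adding any single diagonal $(p_{i-1},p_{i+1})$ hides exactly $p_i$ and yields a potential 4 ving whose quadrilateral contains that diagonal. So $v$ receives five removal charges. Five is also attained for $\pot(v)\ge 6$: take a convex hexagon with $p$ near the middle of $p_3p_4$, where the three long diagonals and $(p_2,p_4)$, $(p_3,p_5)$ all give removal charges (Figure \ref{fi:Lemma52b}(a)). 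The bound of three in Figure \ref{fi:Lemma52a}(c) holds only \emph{under the hypothesis} that some $(p_i,G)$ sends $v$ a reflex charge.

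The correct starting point is therefore at most five removal charges. The whole content of the lemma is the trade-off: a reflex charge must cut removal charges to at most three, and four or five removal charges must exclude reflex charges entirely. For this you also need the sharper bound of at most two reflex charges, as in Lemma \ref{le:reflex3}. With your bound of three or four, even three removal charges would exceed five.

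\emph{The trade-off mechanism.} Your claim that a reflex angle at $p$ kills nearby removal chords is false. In the pentagon example, move $p$ into $\triangle p_5p_1p_2$, between the lines $p_1p_3$ and $p_1p_4$, and let $T(G)$ contain all five spokes $(p,p_i)$. Then $(p,p_1)$ forms a reflex angle around $v$. Yet all five diagonals still give removal charges: $(p_2,p_5)$ now hides $p_3,p_4$ (potential 3), and each of $(p_1,p_3),(p_1,p_4),(p_2,p_4),(p_3,p_5)$ hides one vertex. What prevents a reflex charge here is not the geometry at $p$. It is the potential of the would-be chargers: $(p_2,G)$ and $(p_5,G)$ see the whole pentagon.

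\emph{The missing idea: use the charger's low potential.} Your plan never uses the fact that a charging ving $(p_i,G)$ has potential at most $4$. Such a ving is already affected by $p$ and by $p_{i-1},p_{i+1}$. If $p_i$ is a convex vertex of the pentagon, it is also affected by some vertex outside it, since $p_i$ is not a hull point. Hence $p_i$ fails to see some other pentagon vertex. The diagonals $(p_i,p_{i+2})$ and/or $(p_i,p_{i+3})$ are then crossed by edges of $G$ and cannot be added. Together with the fact that $(p_{i-1},p_{i+1})$ is not an interior diagonal when $p_i$ is reflex, this leaves at most three removal charges.

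For $\pot(v)\ge 6$ the paper argues analogously. Four or five removal charges force $p$ to be adjacent in $G$ only to $p_3,p_4$, and make $p_2p_3p_4p_5$ an empty quadrilateral, so $p_2,\dots,p_5$ are hings or high-potential. Only $(p,p_3)$ and $(p,p_4)$ can then be reflex. The remaining candidate charger $p_j$ is shown to be high-potential by following the chain of blocking edges. Without this use of the charger's potential, your proposed case analysis cannot close.
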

\begin{proof}
Consider a high potential ving $v=(p,G)$. 
We partition the analysis into cases.

\parag{The case of potential 5.}
Assume that $v$ has potential 5 and is affected by vertices $p_1,\ldots,p_5$.
Assume that $p_1,\ldots,p_5$ are numbered in their clockwise order around $p$, forming the vertices of a pentagon that contains $p$.
We note that every removal charge to $v$ corresponds to adding an edge between two vertices of $\{p_1,\ldots,p_5\}$, leading to $p$ having potential 3 or 4. 
Since such an edge is a diagonal of the above pentagon, $v$ receives at most five removal charges.
See Figure \ref{fi:Lemma52a}(a).

Lemma \ref{le:ReflexGen} states that $v$ receives at most two reflex charges.
Such a reflex charge must originate from a vertex of $p_1,\ldots,p_5$ in $G$. 
Consider a vertex $p_i$ that makes a reflex charge to $v$. 
We note that $(p_i,G)$ is affected by $p$ and by its two neighbors on the pentagon.
See Figure \ref{fi:Lemma52a}(b).
Since hings do not make charges, we may assume that $p_i$ is not a hull vertex.
We first consider the case where $p_i$ is a hull vertex of the pentagon $\{p_1,p_2,p_3,p_4,p_5\}$.
In this case, $p_i$ is also affected by at least one vertex outside of the pentagon. 
Thus, for $(p_i,G)$ to have potential 4, it must not be affected by the remaining two pentagon vertices.
In such a case, it is possible to add at most three diagonals of the pentagon to $G$. 
We conclude that, if $v$ receives a reflex charge from a hull vertex of the pentagon, then it receives at most three removal charges.
That is, $v$ receives at most five charges.

It remains to consider the case where $v$ receives a reflex charge from a $p_i$ that is not a hull vertex of the pentagon. 
In this case, the two neighbors of $p_i$ on the pentagon do not form a pentagon diagonal. 
See Figure \ref{fi:Lemma52a}(c).
For $p_i$ to have a potential of at most 4, at least one vertex of the pentagon does not affect $p_i$. 
That is, at most three internal diagonals of the pentagon could be added to $G$, as in Figure \ref{fi:Lemma52a}(c).
Once again, $(p,G)$ receives at most five charges.

\begin{figure}[h]
\centering
\includegraphics[scale=0.067]{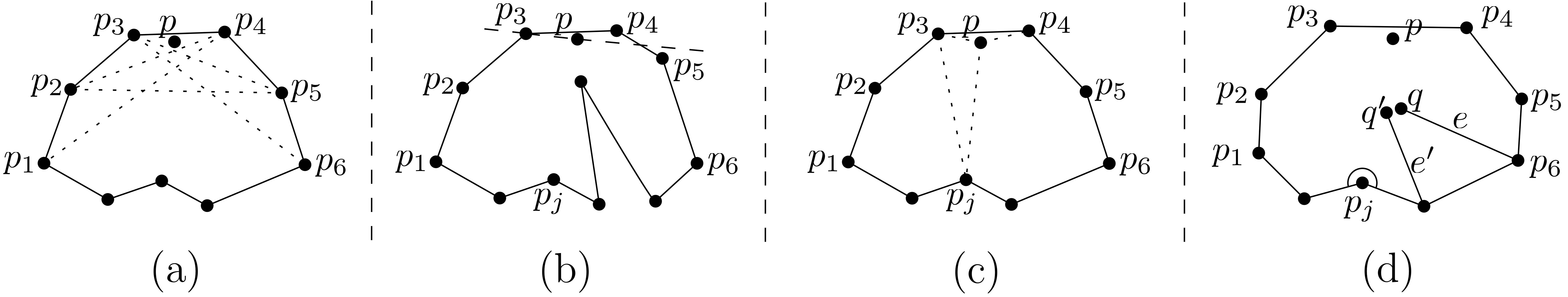}
\caption{Dotted edges are in $T(G)$ but not in $G$. (a) At most five diagonals may give $p$ a potential of 3 or 4. (b) The only point above the line through $p_3$ and $p$ is $p_4$. (c) A case where $T(G)$ contains $\triangle p p_3 p_j$. (d) The visibility between $p_j$ and $p_4$ is blocked by $e$. The visibility between $p_j$ and $q$ is blocked by $e'$. 
}\label{fi:Lemma52b}
\end{figure}

\parag{The case of potential larger than 5.} 
We now assume that $v$ has potential $k\ge 6$ and denote the vertices that affect $v$ as $p_1,\ldots,p_k$, according to their clockwise order around $p$.
A removal charge to $v$ corresponds to adding an edge between two vertices of $p_1,\ldots,p_k$ that decreases the potential of $p$ to 3 or 4. 
By observing Figure \ref{fi:Lemma52b}(a), we note that there are at most three edges whose addition leads to potential 4 and at most two edges whose addition leads to potential 3. 
Moreover, after adding such an edge, the 3 or 4 vertices that still affect $p$ are consecutive with respect to their order around $p$ in $G$.
That is, these vertices are of the form $p_i, p_{i+1}, p_{i+2},p_{i+3}$ (if an index exceeds $k$, we consider it $mod\ k$).
Thus, $v$ receives at most five removal charges.

Consider the case where $v$ receives five removal charges, as in Figure \ref{fi:Lemma52b}(a).
Without loss of generality, we may assume that the relevant vertices are $p_1,\ldots,p_6$.
We will show that, in this case, $v$ cannot receive reflex charges.

We note that $T(G)$ must contain the triangle $\triangle p p_3 p_4$. 
Moreover, the quadrilateral $p_2 p_3 p_4 p_5$ is a convex quadrilateral whose interior does not intersect any edges of $G$. 
This implies that each of $(p_2,G), (p_3,G), (p_4,G),(p_5,G)$ is either a hing or a high-potential ving.
Thus, if $(p,G)$ receives a reflex charge from $(p_j,G)$, then $j\neq 2,3,4,5$.

We claim that the only edges that may potentially form a reflex angle around $v$ are $(p,p_3)$ and $(p,p_4)$.
Indeed, for an edge $(p,p_i)$ with $i\neq 3,4$ to form a reflex angle with $(p,p_3)$, the vertex $p_i$ must be above the line through $p$ and $p_3$. 
See Figure \ref{fi:Lemma52b}(b).
By the above, $p_5$ is below this line. 
We note that every $p_i$ with $i>5$ (and $i<3$) must also be below this line, due to the order of these points around $p$. 
A symmetric argument holds when replacing $(p,p_3)$ with $(p,p_4)$.
We conclude that, when $v$ receives 5 removal charges, it receives no reflex charges.

For such a reflex charge to exist, $T(G)$ must contain the triangle $\triangle p p_3 p_j$ or $\triangle p p_4 p_j$.
Without loss of generality, we assume that $T(G)$ contains $\triangle p p_3 p_j$.
See Figure \ref{fi:Lemma52b}(c).
In $G$, the vertex $p$ may only be connected by an edge to $p_3$ and $p_4$. 
Any other edge that is connected to $p$ in $G$ would prevent it from receiving five removal charges.
For example, if the edge $(p,p_5)$ is in $G$, then we cannot add the diagonals $(p_1,p_4)$ and $(p_2,p_4$), so $v$ receives at most three removal charges.

The ving $(p_j,G)$ is affected by $p_3,p,p_{j-1},p_{j+1}$, so its potential is at least 4.
We may assume that the quadrilateral $p p_{j-1} p_j p_{j+1}$ has a reflex angle at $p_j$, since otherwise $(p_j,G)$ is a hing or a high-potential ving. 
See Figure \ref{fi:Lemma52b}(c).
If $(p_4,G)$ affects $(p_j,G)$ then $(p_j,G)$ is a high-potential ving.
We may thus assume that an edge $e$ blocks the visibility between $(p_4,G)$ and $(p_j,G)$, as in Figure \ref{fi:Lemma52b}(d). 
If an endpoint of $e$ is not $p_{j-1}$ and affects $v$, then we are again done. 
It is possible for another edge $e'$ to block the visibility from that endpoint.
See Figure \ref{fi:Lemma52b}(d).
If an endpoint of $e'$ is not $p_{j-1}$ and affects $v$, then we are again done. 
That endpoint of $e'$ may not be visible from $v$ due to yet another edge, and so on.
After a finite number of such steps, we must reach another ving that affects $v$.
Once again, this implies that $(p_j,G)$ is a high-potential ving. 
We conclude that $(p_j,G)$ is a hing or a high-potential ving, which contradicts the assumption that $(p_j,G)$ makes a reflex charge.

\begin{figure}[h]
\centering
\includegraphics[scale=0.108]{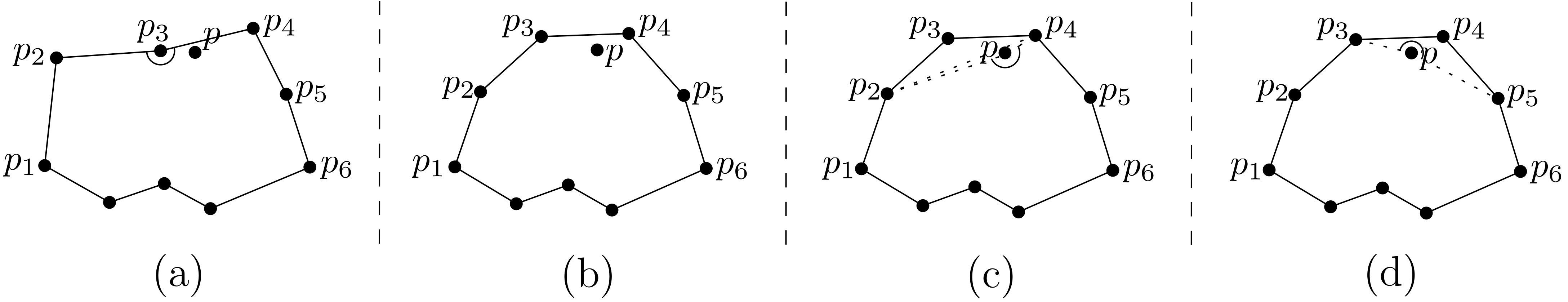}
\caption{Dotted edges are in $T(G)$ but not in $G$. (a) There is a single removal charge from a potential 3 ving, due to the reflex angle at $p_3$. 
(b) There is a single removal charge from a potential 3 ving, since $p$ is not in $\triangle p_2p_3p_4$. (c) The case where $(p_2,p_4)$ is in $T(G)$. (d) The case where $(p,p_3)$ is in $T(G)$. } 
\label{fi:Lemma52c}
\end{figure}

\parag{The case of 4 removal charges.}
We continue to study the case where $v$ has potential $k\ge 6$.
Since every  ving receives at most two reflex charges, if $v$ receives at most three removal charges then we are done.
Thus, it remains to consider the case where $v$ receives exactly four removal charges.
There are two ways for this to happen:
\begin{itemize}[noitemsep,topsep=1pt]
\item The quadrilateral $p_2p_3p_4p_5$ has a reflex angle at $p_3$ or $p_4$, so $v$ receives a single removal charge from a potential 3 ving. See Figure \ref{fi:Lemma52c}(a). 
\item The quadrilateral $p_2p_3p_4p_5$ is convex, but $p$ is outside of $\triangle p_2p_3p_4$ or $\triangle p_3p_4p_5$. Once again, $v$ receives a single removal charge from a potential 3 ving. See Figure \ref{fi:Lemma52c}(b).
\end{itemize}
In either case, $v$ still receives three removal charges from potential 4 vings.
As before, $p$ cannot be connected to any vertex in $G$, except possibly $p_3$ and $p_4$. 
Any other edge connected to $p$ would lead to at most three removal charges.

We begin with the former case. 
Without loss of generality, assume that $p_2p_3p_4p_5$ has a reflex angle at $p_3$, as in Figure \ref{fi:Lemma52c}(a).
We may repeat the above analysis for the case of five removal charges and $k\ge 6$ (that is, showing that no ving can make a reflex charge to $v$). 
In this case, $(p_3,G)$ remains a high-potential ving since it is affected by $(p,G),(p_2,G),(p_4,G),(p_5,G)$, $(p_6,G)$.
We conclude that, in this case, $v$ receives at most four charges.

Finally, we consider the case where the quadrilateral $p_2p_3p_4p_5$ is convex. 
Without loss of generality, we assume that $v$ is outside of $\triangle p_2p_3p_4$, as in Figure \ref{fi:Lemma52c}(b). 
We would like to repeat the same analysis, but this problem behaves a bit differently.
First, while the edge $(p,p_4)$ must exist in $T(G)$, this is no longer the case for $(p,p_3)$.   
Instead, $T(G)$ may contain the edges $(p_2,p_4)$ and $(p,p_2)$.
See Figure \ref{fi:Lemma52c}(c).
We partition our analysis according to whether $(p,p_3)$ or $(p_2,p_4)$ is in $T(G)$.

Assume that $(p,p_3)$ is in $T(G)$, as depicted in Figure \ref{fi:Lemma52c}(d).
In this case, we can repeat the above analysis for the case when $v$ receives five removal charges. 
The only difference is that $\angle p_5pp_3$ is a reflex angle, so $(p_5,G)$ may potentially send a reflex charge to $v$. 
However, we note that $(p_5,G)$ is either a hing or a high-potential ving, so this cannot occur after all.
Once again, we get that $v$ cannot receive reflex charges and $v$ is charged at most four times.

It remains to consider the case where $(p_2,p_4)$ is in $T(G)$.
In this case, we can again repeat the above analysis, but with $(p,p_2)$ taking the role of $(p,p_3)$. 
That is, the only edges that may form reflex angles around $p$ are $(p,p_2)$ and $(p,p_4)$.
See Figure \ref{fi:Lemma52c}(c).
Repeating the above implies that any $p_j$ that forms a triangle $\triangle pp_2p_j$ in $T(G)$ corresponds either to a hing or to a high-potential ving. 
\end{proof}

We are now ready to establish the maximum charge that a high potential ving may have.

\begin{lemma}
After all charge redistribution steps, every high potential ving has a charge of at most $37/8$.
\end{lemma}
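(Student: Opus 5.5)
Write $v=(p,G)$ for a high-potential ving, meaning one of potential at least 5. I will simply add up the charge $v$ can collect from the two stages of redistribution.

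\textbf{First stage.} Here charge flows only out of potential 3 vings, following the scheme from the proof of Theorem~\ref{th:potential}(a). Lemmas~\ref{le:removal3} and~\ref{le:reflex3} give that every ving of potential at least 4 receives at most 2 units in this stage. So after the first stage, $v$ holds at most $2$.

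\textbf{Second stage.} Charge now moves only out of potential 4 vings, as removal charges or reflex charges. Charges from potential 4 vings to potential 3 or 4 vings are turned into type (ii) self charges, so they never reach $v$. Every individual transfer, even after being raised at the end of the stage, is at most $7/8$. This holds because a potential 4 ving holds at most $7/2$ after the first stage, each per-edge share is at most $7/8$, and the increase step caps every transfer at $7/8$.

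\textbf{Combining the two stages.} Lemma~\ref{le:case4HighPot} says $v$ receives at most five charges in total from potential 3 and 4 vings. A naive count of $2+5\cdot 7/8 = 51/8$ is too large. The bound $37/8 = 2 + 21/8 = 2+3\cdot 7/8$ suggests the right split, and the natural accounting is to give each charge its own weight: a first-stage charge is worth $1$ and a second-stage charge at most $7/8$. I then bound
\[ a + \tfrac{7}{8}\, b \]
where $a$ counts charges from potential 3 vings and $b$ counts charges from potential 4 vings. The constraints are $a\le 2$ (from Section~\ref{sec:v3}) and $a+b\le 5$ (from Lemma~\ref{le:case4HighPot}). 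The expression is largest at $a=2$, $b=3$, giving
\[ 2 + \tfrac{21}{8} = \tfrac{37}{8}. \]
When $a<2$ the total is even smaller, since every charge from a potential 4 ving is worth less than $1$.

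\textbf{Main obstacle.} The key step is checking that Lemma~\ref{le:case4HighPot} counts first-stage and second-stage charges together in a single total of five. This means the removal and reflex charges from potential 3 vings received in the first stage are included in that count of five, not counted separately. The proof of Lemma~\ref{le:case4HighPot} does count removal charges corresponding to potential 3 outcomes (for instance, the diagonals giving $p$ potential 3) in the same tally as the others, so this appears to hold.

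I also need the per-charge bound of $7/8$ to survive the increase step. Each transfer is raised to at most $7/8$, and a single potential 4 ving sends $v$ at most one charge per quadrilateral edge. Different quadrilateral edges of the same potential 4 ving could, however, both send charge to $v$, and these are still counted as separate charges in Lemma~\ref{le:case4HighPot}. So the weighted bound above stays valid, and the lemma follows.
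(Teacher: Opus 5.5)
Your proof is correct. It reaches the key fact, that at most two of the five charges are unit charges from potential 3 vings, by a shorter route than the paper.

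The paper re-derives that fact geometrically and case by case:
\begin{itemize}
\item For potential at least 6, it points back to the removal-charge analysis in Lemma \ref{le:case4HighPot}: at most two added diagonals leave $p$ with potential 3.
\item For potential 5, it splits into cases. It bounds four or fewer charges crudely by $4\cdot 1$. With exactly five charges, it argues they are all removal charges across a convex pentagon, of which at most two give $p$ potential 3, and that a reflex charge would force too few removal charges.
\end{itemize}

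You instead observe that the first stage is exactly the scheme of Section \ref{sec:v3}. So Lemmas \ref{le:removal3} and \ref{le:reflex3} already cap the number of unit charges into any ving of potential at least 4 at two. Maximizing $a+\frac{7}{8}b$ under $a\le 2$ and $a+b\le 5$ then handles every potential and every charge count at once.

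This buys you two things:
\begin{itemize}
\item The argument is uniform, with no split into potential 5 versus potential at least 6.
\item It automatically covers reflex charges from potential 3 vings in the potential $\ge 6$ case. The paper's pointer there only addresses removal charges.
\end{itemize}

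You also read Lemma \ref{le:case4HighPot} as counting individual transfers rather than charging vings, and note that one potential 4 ving can send two reflex charges to the same vertex. That is the reading the $\frac{7}{8}$-per-charge accounting needs, and it matches the lemma's statement. The paper's proof paraphrases the lemma more loosely, as ``at most five potential 3 and 4 vings.''
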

\begin{proof}
Consider a high potential ving $v$.
Lemma \ref{le:case4HighPot} states that $v$ receives charge from at most five potential 3 and 4 vings. 
When $v$ has potential at least 6, it receives charge from at most two potential 3 vings (see the proof of this case in Lemma \ref{le:case4HighPot}). 
Thus, $v$ has a charge of at most $3\cdot 7/8 + 2\cdot 1 = 37/8$.

It remains to consider the case where $v$ has potential 5. 
If $v$ receives at most four charges then it has a charge of at most $4\cdot 1 = 4$.
We may thus assume that $v$ receives exactly five charges.

We consider the case where $v$ receives five removal charges and no reflex charges.
That is, the pentagon of $v$ is convex.
By considering Figure \ref{fi:Lemma52a}(a) and trying different positions for $p$, we note that at most two of the removal charges to $v$ come from potential 3 vings.
In other words, no matter where we place $p$, there are at most two diagonals whose addition gives $p$ potential 3.
Thus, as above, the charge of $v$ is at most $37/8$.

Finally, we consider the case where $v$ receives at least one reflex charge.
By this part of the proof of 
Lemma \ref{le:case4HighPot}, such a charge implies that there are at most three removal charges, which contradicts our assumption of five charges.
The same argument implies that two reflex charges to $v$ yield at most two removal charges.
We conclude that this case cannot occur.
\end{proof}

We now study the maximum charge a hing may have. 

\begin{lemma} \label{le:case4Hing}
After the two charge redistribution steps, every hing holds a charge of at most 2.
\end{lemma}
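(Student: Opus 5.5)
We need to show that every hing ends up with charge at most 2. Hings start with no charge. They receive no removal charges, since a removal charge always goes to a ving $(p,G')$ with $p\in\pts_I$. The only charges a hing can receive are therefore reflex charges, from the first stage (potential 3 vings) or the second stage (potential 4 vings). The plan is to bound the number of such charges and their sizes.

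\emph{Step 1: count the edges.} Let $v=(a,G)$ be a hing. The two hull edges at $a$ already form reflex angles around $a$, and they are consecutive around $a$ in $T(G)$. By Lemma \ref{le:ReflexGen}, no other edge of $T(G)$ forms a reflex angle around $v$. So every reflex charge to $v$ corresponds to one of the two hull edges $(a,b)$ and $(a,b')$.

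\emph{Step 2: one charge per hull edge.} Each hull edge $(a,b)$ lies in exactly one triangle of $T(G)$; call it $\triangle abq$. Any ving whose triangle or quadrilateral has $(a,b)$ as a side lies on the interior side of $(a,b)$. Its polygon is empty of points and is not crossed by edges, so I expect that ving must be $(q,G)$. Hence each hull edge gives at most one reflex charge, and $v$ receives at most two reflex charges in total.

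\emph{Step 3: bound the sizes.} A charge coming from a potential 3 ving is exactly $1$. A charge coming from a potential 4 ving is at most $7/8$, even after the ``increase to $7/8$'' rule. So the total is at most $2$, which proves the lemma.

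\emph{Main obstacle.} The hard part is Step 2 for potential 4 vings, whose quadrilateral may be concave and may contain a diagonal in $T(G)$. In that case the ving incident to $\triangle abq$ could be a quadrilateral vertex rather than $q$. I would argue directly: the third vertex $q$ of the hull triangle separates any second candidate ving from $(a,b)$. If $q$ is not the charging ving, it is a vertex of that ving's quadrilateral, and a second candidate would then have to see $a$ and $b$ past $q$, which contradicts emptiness.

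Failing that, a fallback is to use only the count. One hull edge receiving two charges of size at most $7/8$, with the other edge receiving none, still totals at most $7/4<2$. Two charges of size $1$ on the same hull edge cannot both come from potential 3 vings, because Lemma \ref{le:reflex3} already handles the first stage. The remaining mixed configurations would then need to be ruled out or bounded by a short case check.
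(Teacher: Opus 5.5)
Your outline (hings get no removal charges, Lemma \ref{le:ReflexGen} leaves only the two hull edges at the hing, each hull edge carries at most one reflex charge, each charge is at most $1$) is exactly the paper's count. The gap is in Step 2.

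Your claim that any ving whose triangle or quadrilateral has the hull edge $(a,b)$ as a side must be $(q,G)$ is false. So is the uniqueness statement your ``main obstacle'' argument aims for. Take $a=(0,0)$, $b=(4,0)$, $c=(2,5)$, $p_1=(1,2)$, $p_2=(3,2)$, and let $G$ have no edges. Both $(p_1,G)$ and $(p_2,G)$ have potential 4. Their quadrilaterals are $a\,b\,p_2\,c$ and $a\,b\,c\,p_1$, so both have $(a,b)$ as a side. $T(G)$ contains exactly one of the crossing segments $(a,p_2)$ and $(b,p_1)$; say $(a,p_2)$, so $q=p_2$. Then $q$ is a vertex of the quadrilateral of $p_1$, and the second candidate is $q$ itself. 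It is not some ving that would have to ``see $a$ and $b$ past $q$.''

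Your fallback does not close this either. The quadrilateral of $p_1$ also has the hull edge $(c,a)$ as a side. On geometry alone, the hing $a$ could therefore collect three charges: two across $(a,b)$ and one across $(c,a)$. This is one of your unresolved ``mixed configurations,'' and it genuinely occurs.

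What rules it out is the charging rule, not the geometry:
\begin{itemize}
\item A hull edge never yields a removal charge.
\item By Lemma \ref{le:RemRefSelf}, a potential 4 ving $(p,G)$ makes a reflex charge along a hull edge $(a,b)$ only in case (a), when $(p,a),(p,b)\in T(G)$. In case (b) the hull edge is a type (i) self charge and stays on $(p,G)$; this is what happens to $p_1$ above.
\item In case (a), $\triangle pab$ lies in the empty quadrilateral and its three sides are edges of $T(G)$. So no edge of $T(G)$ can enter it, and it is a face of $T(G)$. Since $(a,b)$ bounds only one face, $p=q$.
\item The same holds for a potential 3 ving. All sides of its triangle lie in $T(G)$: a side with a removal charge is in $G$, and the proof of Lemma \ref{le:NoRemReflex} covers the others. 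So $p$ is joined to $a,b,c$ in $T(G)$, and $\triangle pab$ is again the face on $(a,b)$.
\end{itemize}
Hence each hull edge carries at most one reflex charge over both stages. Your Step 3, or simply the bound $1$ per charge as in the paper, then finishes the proof.
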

\begin{proof}
Consider a hing $h=(p,G)\in \pts_H\times \gr(\pts)$.
By definition, $h$ receives no removal charges.
By Lemma \ref{le:ReflexGen}, the edges that form a reflex angle around $h$ are the two edges of the convex hull of $\pts$.
Each of these two edges corresponds to at most one reflex charge.
Indeed, one side of each edge is outside the convex hull of $\pts$ and cannot lead to such charges.
We conclude that $h$ is charged at most twice, receiving a charge of at most $2\cdot 1 = 2$. 
\end{proof}

Finally, we study the charge that remains on potential 4 vings due to self charges.

\begin{lemma} \label{le:case34Charge4}
Each potential 4 ving ends up with a charge of at most $7/4$.
\end{lemma}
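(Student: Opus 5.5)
We need to bound the charge left on a potential 4 ving $v=(p,G)$ after both stages. After the first stage, $v$ holds $3/2$ plus at most two units received from potential 3 vings (Lemmas \ref{le:removal3} and \ref{le:reflex3}), so its charge is at most $7/2$. In the second stage, the charge is split evenly over the four edges of the quadrilateral of $v$, and it leaves $v$ exactly through the edges that correspond to removal charges or to reflex charges landing on a hing or a high-potential ving. The remaining edges are self charges of type (i) or (ii), and their share stays on $v$; after this, every outgoing charge is raised to $7/8$. The plan is therefore to bound the number $s$ of self-charging edges in terms of the initial charge $c\in\{3/2,5/2,7/2\}$. After raising outgoing charges, the remaining charge is $\max\{0,\, c-(4-s)\cdot 7/8\}$. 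We need this to be at most $7/4$. That holds automatically when $c\le 5/2$ and $s\le 3$, and when $c=7/2$ and $s\le 2$.

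The first key step is to show that every potential 4 ving has at least one non-self edge. By Lemma \ref{le:RemRefSelf}, an edge is a removal, reflex, or type (i) self charge, and type (i) self charges occur on at most two edges. Type (ii) arises only when the reflex target is a ving of potential 3 or 4. So I would argue that not all four edges can be self charges. The two edges incident to the triangle of $T(G)$ containing $p$ satisfy the hypothesis of Lemma \ref{le:RemRefSelf}(a), so each is a removal or reflex charge. If such an edge is a reflex charge to a vertex $a$ of the quadrilateral, I would argue as in Lemma \ref{le:Affected3}: $a$ sees $p$, its two quadrilateral neighbours, and a vertex in the closed half-plane beyond. This should make it impossible for both of these edges to send charge to a potential 3 ving. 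Potential 3 is excluded outright by Lemma \ref{le:Affected3}, since $p$ affects $a$. The potential 4 case needs a geometric argument like Figure \ref{fi:TypeI}(b). This gives $s\le 3$, which settles $c\le 5/2$.

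The second key step, which I expect to be the main obstacle, is the case $c=7/2$. Here $v$ received two units in stage one, so by Lemmas \ref{le:removal3} and \ref{le:reflex3} we are in one of two configurations. Either $v$ received two removal charges, in which case the quadrilateral $abcd$ is convex with $p$ seeing only behind the blocking edges $(a,b)$ and $(c,d)$ (Figure \ref{fi:removal3c}). Or $v$ received two reflex charges, in the configurations of Figure \ref{fi:TwoReflex}. In each configuration I would identify the quadrilateral of $v$ explicitly and show that at least two of its edges are removal charges or reflex charges to hings or high-potential vings, so that $s\le 2$.

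For the two-removal case, the edges $(a,b)$ and $(c,d)$ are in $G$, and removing either raises the potential of $p$ (it was potential 3 in the other graph), so both are removal charges. For the two-reflex case, $p$ has reflex angles at edges toward potential 3 vings $p',p''$. These are vertices of the quadrilateral, and each quadrilateral edge incident to them is incident to a triangle of $T(G)$ containing $p$, so Lemma \ref{le:RemRefSelf}(a) applies. Reflex charges from those edges go to vertices that are affected by $p$ and lie beyond a reflex angle, which I would show have potential at least 5 or are hings. Checking these few configurations carefully, especially the exact positions allowed by Figure \ref{fi:TwoReflex}, is where the argument is most delicate.
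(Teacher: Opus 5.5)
Your bookkeeping is correct and matches the paper's. With $s$ self-charging edges the leftover charge is $\max\{0,c-(4-s)\cdot 7/8\}$, so you need $s\le 3$ when $c\le 5/2$ and $s\le 2$ when $c=7/2$. The gap is in how you bound $s$.

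Lemma \ref{le:Affected3} only forbids a potential 3 ving from being affected by another potential 3 ving. Since $v$ has potential 4, the lemma does not exclude a potential 3 target, and this situation really occurs (the paper's Case 2). Suppose $p_3$ is a reflex vertex of the quadrilateral lying inside $\triangle pp_2p_4$. Then $(p_3,G)$ may have potential 3. Both $(p_2,p_3)$ and $(p_3,p_4)$ are non-removal edges that form a reflex angle only around $p_3$, so both are type (ii) self charges. Your half-plane argument applies only at convex vertices, and there it gives potential at least 4, not 5, so potential 4 targets are not excluded either. Bounding $s$ therefore needs the paper's analysis of reflex versus convex quadrilateral vertices and of the degree of $p$ in $T(G)$. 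For example, at a convex vertex $p_1$ of potential 4, if one of $(p_4,p_1),(p_1,p_2)$ is a type (ii) self charge, the other is a removal charge. Your sketch supplies none of this.

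For $c=7/2$, the argument you give is incorrect. In the two-removal case, $(a,b)$ and $(c,d)$ are not edges of $G$. They are the edges added to $G$ to create the potential 3 vings $(p,G')$ and $(p,G'')$, and they are the diagonals of the quadrilateral of $v$, not its sides. So they say nothing about the charges $v$ sends; you are conflating removal charges received with removal charges sent. The right reason is that the quadrilateral is convex and its four vertices are mutually visible and visible from $p$. Hence each is a hing or a high-potential ving, so there is no type (ii) self charge, and type (i) accounts for at most two edges.

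Your case split also omits the mixed case of one removal charge and one reflex charge, which Lemmas \ref{le:removal3} and \ref{le:reflex3} permit. Meanwhile, the two-reflex configurations of Figure \ref{fi:TwoReflex} are in fact impossible at potential 4, since they produce at least five affecting vertices. The mixed case matters. A reflex charge from a potential 3 ving $(q,G)$ makes $q$ a reflex vertex of the quadrilateral with two type (ii) self charges. One more self charge would then leave $7/2-7/8=21/8>7/4$. The paper closes this in Case 2 by showing that once $v$ receives a reflex charge from a potential 3 ving, it receives no other charge from potential 3 vings, so $c=5/2$.
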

\begin{proof}
We consider a potential 4 ving $v=(p,G)$.
Let $p_1,p_2,p_3,p_4$ be the vertices of the quadrilateral of $v$, in clockwise order around $p$.
We partition the analysis into cases, as follows.

\begin{figure}[h]
\centering
\includegraphics[scale=0.064]{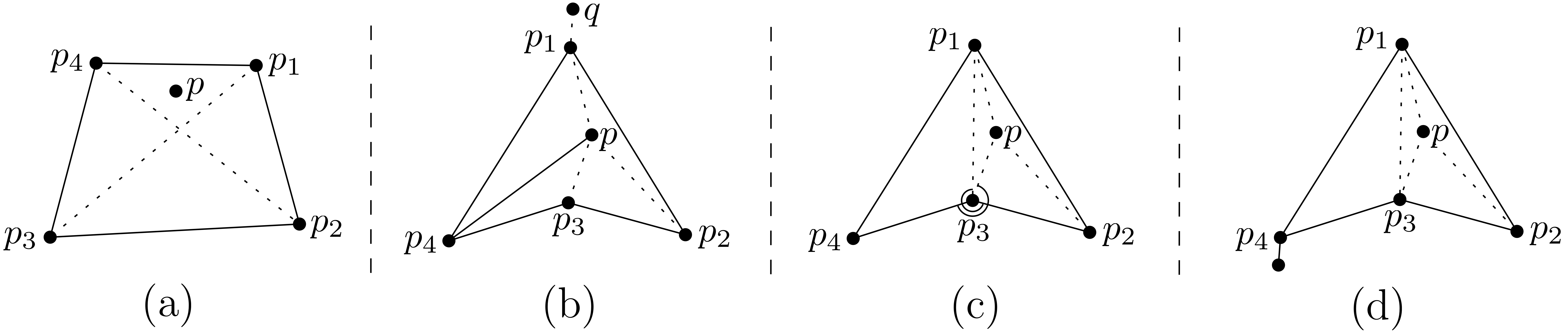}
\caption{(a) A potential 4 ving that receives two removal charges from potential 3 vings. 
(b) A type (ii) self charge from potential 3 ving $(p_3,G)$. 
(c) Two type (ii) self charges from potential 4 ving $(p_3,G)$.
(d) The edge $(p_1,p_4)$ corresponds to a type (i) self charge due to the outside reflex angle around $p_4$.
 }
\label{fi:Cases12}
\end{figure}

\newcounter{cases}

\stepcounter{cases}
\emph{\underline{Case \arabic{cases}.} The ving $v$ receives two removal charges from potential 3 vings.}
In this case, the quadrilateral of $v$ is convex, and all vertices of this quadrilateral are visible from each other in $G$.
See Figure \ref{fi:Cases12}(a).
This implies that each vertex of the quadrilateral is either a hing or a high potential vertex.
That is, there are no type (ii) self charges.

We recall that $v$ can have at most two type (i) self charges. 
Thus, $v$ remains with a charge of at most $7/2-2\cdot 7/8 = 7/4$.
In the following cases, we assume that $v$ receives at most one removal charge.

\stepcounter{cases}
\emph{\underline{Case \arabic{cases}.} The ving $v$ has a type (ii) self charge due to a potential 3 ving. }
Without loss of generality, we assume that the potential 3 ving is $(p_3,G)$. See Figure \ref{fi:Cases12}(b).
Since $(p_3,G)$ is affected by $(p,G)$, $(p_2,G)$, and $(p_4,G)$, we get that $(p_3,G)$ is not affected by another vertex.
In particular, $(p_3,G)$ is not affected by $(p_1,G)$ due to $(p,p_2)$ or $(p,p_4)$.
Also, $p_3$ must be a reflex vertex of the quadrilateral of $v$.
Then, $(p,G)$ has two type (ii) self charges due to $(p_3,G)$, corresponding to $(p_2,p_3)$ and to $(p_3,p_4)$.

In this case, $p$ is connected to all of $p_1,p_2,p_3,p_4$ in $T(G)$.
By Lemma \ref{le:RemRefSelf}(a), there are no self charges of type (i).
We note that each of $(p_2,G)$ and $(p_4,G)$ is either a hing or a high potential ving.
Similarly, $(p_1,G)$ is either a hing or a ving of potential at least 4.
This implies that $v$ receives a single charge from potential 3 ving: a reflex charge from $(p_3,G)$. 
That is, after the first step of charge redistribution, $(p,G)$ has a charge of $5/2$.

By the above, $v$ may have type (ii) self charges due to $(p_1,G)$, but not due to $(p_2,G)$ and $(p_4,G)$. 
For $(p_1,G)$ to be a potential 4 ving, it should be affected by exactly one additional vertex $q$. 
For $p_1$ not to be a hull vertex, $(q,G)$ should be affected by both $(p_2,G)$ and $(p_4,G)$.
See Figure \ref{le:RemRefSelf}(b).
If one of the edges $(p_4,p_1)$ and $(p_1,p_2)$ corresponds to a type (ii) self charge of $v$, then the other edge corresponds to a removal charge from $v$. 
We conclude that $v$ remains with a charge of at most $5/2-7/8 = 13/8$. 

In all following cases, we assume that there are no type (ii) self charges due to potential 3 vings. 
Since $v$ receives no reflex charges from potential 3 vings and at most one removal charge, $v$ distributes a charge of at most $5/2$.

\stepcounter{cases}
\emph{\underline{Case \arabic{cases}.} The ving $v$ has two self charges due to the same potential 4 ving. }
Without loss of generality, we assume that the ving corresponding to these two charges is $(p_3,G)$.
Then $p_3$ is a reflex vertex of the quadrilateral of $v$, and $(p_3,G)$ is affected by exactly $(p,G),(p_1,G),(p_2,G),(p_4,G)$.
See Figure \ref{fi:Cases12}(c).
The edges $(p_2,p_3)$ and $(p_3,p_4)$ correspond to self charges of $v$.

In this case, each of $(p_1,G),(p_2,G),(p_4,G)$ is a hing or a high potential ving.
Thus, these cannot correspond to a type (ii) self charge.
Without loss of generality, we assume that $p$ is in $\triangle p_1p_2p_3$ (rather than in $\triangle p_3p_4p_1$).
The only possible third self charge of $v$ is a type (i) self charge with $(p_1,p_4$).
See Figure \ref{fi:Cases12}(d).
Thus, $v$ ends up with a charge of at most $5/2-7/8 = 13/8$.

In the following cases, if the quadrilateral has a reflex vertex, then this vertex is a high potential ving.

\begin{figure}[h]
\centering
\includegraphics[scale=0.14]{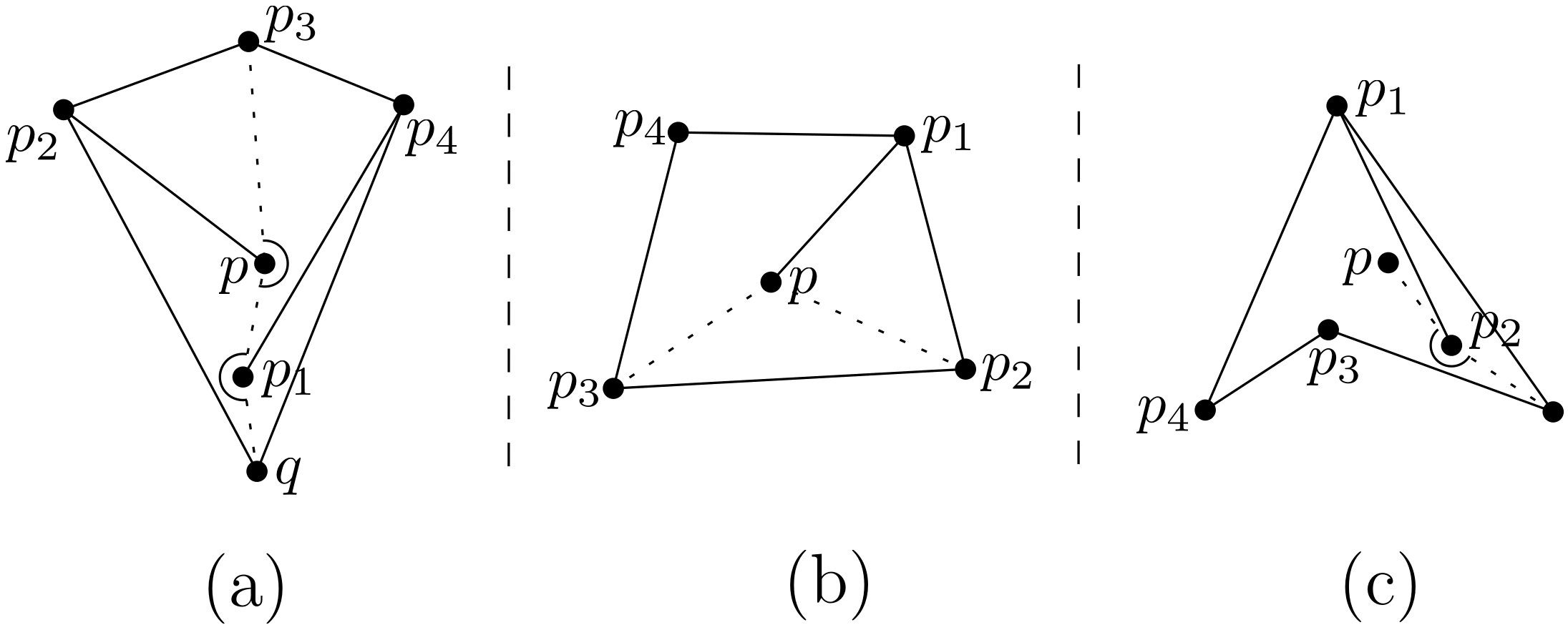}
\caption{Dotted edges are in $T(G)$ but not in $G$. (a) A type (i) self charge due to $(p_1,G)$. (b) The quadrilateral of $v$ is convex. We cannot add $(p_2,p_4)$ due to the existence of $(p,p_1)$. (c) The quadrilateral of $v$ is not convex.
}
\label{fi:Case4}
\end{figure}

\stepcounter{cases}
\emph{\underline{Case \arabic{cases}.} In $T(G)$, the vertex $p$ has degree 4.}
That is, $p$ is connected to $p_1,p_2,p_3,p_4$ in $T(G)$.
By Lemma \ref{le:RemRefSelf}(a), $v$ has no type (i) self charges.
We claim that $v$ has at most two type (ii) self charges, so it remains with a charge of at most $5/2-2\cdot 7/8=3/4$.

By the above, $v$ may not receive type (ii) self charges from reflex vertices of the quadrilateral of $v$. 
Without loss of generality, we assume that $v$ has a type (ii) self charge due to $(p_1,G)$. 
Since $(p_1,G)$ must be a potential 4 ving, there exists $q$ that is affected by $(p_1,G)$, $(p_2,G)$, and $(p_3,G)$.
See Figure \ref{fi:Case4}(a).

We note that if one of the edges $(p_1,p_2)$ and $(p_4,p_1)$ corresponds to a type (ii) self charge of $v$, then the other edge corresponds to a removal charge from $v$. 
In addition, each of $(p_2,G)$ and $(p_4,G)$ is a hing or a high potential ving. 
Thus, $v$ has no self charges due to $(p_2,G)$ and $(p_4,G)$.
A second type (ii) self charge may symmetrically correspond to $(p_3,G)$.

\stepcounter{cases}
\emph{\underline{Case \arabic{cases}.} In $T(G)$, the vertex $p$ has degree 3.}
Without loss of generality, we assume that $p$ is connected to $p_1,p_2,p_3$ in $T(G)$.
That is, $T(G)$ includes the edge $(p_1,p_3)$ and $p$ is in $\triangle p_1p_2p_3$.
In this case, $(p,G)$ may have two type (i) self charges, corresponding to the edges $(p_3,p_4)$ and $(p_4,p_1)$.
Since $(p_1,p_3)\in T(G)$, both $p_2$ and $p_4$ cannot be reflex vertices of the quadrilateral of $v$.
See Figures \ref{fi:Case4}(b,c).

In this case, it is possible that both $(p_3,p_4)$ and $(p_4,p_1)$ correspond to type (i) self charges.
By repeating the analysis of the preceding case, at most one of $(p_1,p_2)$ and $(p_2,p_3)$ corresponds to a type (ii) self charge. 
Since $v$ has at most three self charges, it has a remaining charge of at most $5/2-7/8=13/8$.
\end{proof}

Since we established the maximum charge each ving and hing may receive, the proof is complete.
\end{proof}

\end{document}